\newtheorem{thrm}{Theorem}
\newtheorem{mydef}{Definition}
\newtheorem{prop}{Proposition}
\newtheorem{rmk}{Remark}
\newtheorem{hyp}{Hypothesis}
\newtheorem{cor}{Corollary}
\def\({\left(}
\def\){\right)}
\def\[{\left[}
\def\]{\right]}
\def\r|{\right\rVert}
\def\l|{\left\lVert}
\def\0{{\mathbf 0}}
\def\bx{{\mathbf x}}
\def\by{{\mathbf y}}
\def\bz{{\mathbf z}}
\def\bw{{\mathbf w}}
\def\bv{{\mathbf v}}
\def\tr{{\mathbf{tr}}}
\def\sp{{\text{span}}}
\def\bA{\mathbf A}
\def\bB{\mathbf B}
\def\bC{\mathbf C}
\def\bD{\mathbf D}
\def\be{\mathbf e}
\def\bF{\mathbf F}
\def\bL{\mathbf L}
\def\bR{\mathbf R}
\def\bP{\mathbf P}
\def\bM{\mathbf M}
\def\bN{\mathbf N}
\def\bH{\mathbf H}
\def\bI{\mathbf I}
\def\bQ{\mathbf Q}
\def\bT{\mathbf T}
\def\bU{\mathbf U}
\def\bV{\mathbf V}
\def\bX{\mathbf X}
\def\bW{\mathbf W}
\def\bOmega{{\boldsymbol \Omega}}
\def\bPhi{{\boldsymbol \Phi}}
\def\bSigma{{\boldsymbol \Sigma}}
\def\bXi{{\boldsymbol \Xi}}
\def\bUpsilon{{\boldsymbol \Upsilon}}
\def\b1{{\mathbf 1}}
\def\T{{\mathrm T}}
\def\H{{\mathrm H}}
\def\O{{\mathcal O}}
\def\E{{\mathcal E}}
\newcommand{\TheTitle}{Asymptotic forecast uncertainty and the unstable subspace in the presence of additive model error}
\title{{\TheTitle}\thanks{Submitted to the editors July 26, 2017.
\funding{This work benefited from funding by the project REDDA of the Norwegian Research Council under contract 250711}}}
\author{
  Colin Grudzien\thanks{Nansen Environmental and Remote Sensing Center, Bergen, Norway
    (\email{colin.grudzien@nersc.no}, \url{https://cgrudz.github.io}).}
  \and
  Alberto Carrassi\thanks{Nansen Environmental and Remote Sensing Center, Bergen, Norway (\email{alberto.carrassi.nersc.no}})
  \and
  Marc Bocquet,\thanks{CEREA, joint laboratory \'{E}cole des Ponts ParisTech and EDF R\&D, Universit\'{e} Paris-Est,
Champs-sur-Marne, France (\email{marc.bocquet@enpc.fr})
}
}
\begin{document}

\maketitle

\begin{abstract}
It is well understood that dynamic instability is among the primary drivers of forecast uncertainty in chaotic, physical systems.  Data assimilation techniques have been designed to exploit this phenomena, reducing the effective dimension of the data assimilation problem to the directions of rapidly growing errors.  Recent mathematical work has, moreover, provided formal proofs of the central hypothesis of the Assimilation in the Unstable Subspace methodology of Anna Trevisan and her collaborators: for filters and smoothers in perfect, linear, Gaussian models, the distribution of forecast errors asymptotically conforms to the unstable-neutral subspace.  Specifically, the column span of the forecast and posterior error covariances asymptotically align with the span of backward Lyapunov vectors with non-negative exponents.  

Earlier mathematical studies have focused on perfect models, and this current work now explores the relationship between dynamical instability, the precision of observations and the evolution of forecast error in linear models with additive model error.  We prove bounds for the asymptotic uncertainty, explicitly relating the rate of dynamical expansion, model precision and observational accuracy.  Formalizing this relationship, we provide a novel, necessary criterion for the boundedness of forecast errors.  Furthermore, we numerically explore the relationship between observational design, dynamical instability and filter boundedness.  Additionally, we include a detailed introduction to the Multiplicative Ergodic Theorem and to the theory and construction of Lyapunov vectors. 

While forecast error in the stable subspace may not generically vanish, we show that even without filtering, uncertainty remains uniformly bounded due its dynamical dissipation.  
However, the continuous re-injection of uncertainty from model errors may be excited by transient instabilities in the stable modes of high variance, rendering forecast uncertainty impractically large.  In the context of ensemble data assimilation, this requires rectifying the rank of the ensemble-based gain to account for the growth of uncertainty beyond the unstable and neutral subspace, additionally correcting stable modes with frequent occurrences of positive local Lyapunov exponents that excite model errors.

\end{abstract}

\begin{keywords}
 Kalman filter, data assimilation, model error, Lyapunov vectors, control theory
\end{keywords}

\begin{AMS}
  93E11, 93C05, 93B07, 60G35, 15A03
\end{AMS}

\section{Introduction}

The seminal work of Lorenz \cite{lorenz63} demonstrated that, even in deterministic systems, infinitesimal perturbations in initial conditions can rapidly lead to a long-term loss of predictability in chaotic, physical models.  In weather prediction, this understanding led to the transition from single-trajectory forecasts to ensemble-based, probabilistic forecasting \cite{leutbecher2008ensemble}.  Historically, ensembles have been initialized in order to capture the spread of rapidly growing perturbations \cite{buizza93,toth1997ensemble}.  Data assimilation methods have likewise been designed to capture this variability in the context of Bayesian and variational data assimilation schemes; see e.g., Carrassi et al. for a recent survey of data assimilation techniques in geosciences \cite{carrassi2018data}.  The ensemble Kalman filter, particularly, has been shown to strongly reflect these dynamical instabilities \cite{carrassi2009, ng2011role, gonzalez2013ensemble, bocquet2017four}, and its performance depends significantly upon whether these rapidly growing errors are sufficiently observed and corrected.   

The Assimilation in the Unstable Subspace (AUS) methodology of Trevisan et. al. \cite{carrassi2008b, trevisan2010, trevisan2011kalman, palatella2013a, palatella2015} has provided a robust, dynamical interpretation of these observed properties of the ensemble Kalman filter.  For deterministic, linear, Gaussian models, Trevisan et al. hypothesized that the asymptotic filter error concentrates in the span of the unstable-neutral backward Lyapunov vectors (BLVs), and this has recently been mathematically proven.  Gurumoorthy et. al. \cite{gurumoorthy2017} demonstrated that the null space of the forecast error covariance matrices asymptotically contain the time varying subspace spanned by the stable BLVs.  This result was generalized by Bocquet et. al. \cite{bocquet2017degenerate}, proving the asymptotic equivalence of reduced rank initializations of the Kalman filter with the full rank Kalman filter: as the number of assimilations increases towards infinity, the covariance of the full rank Kalman filter converges to a sequence of low rank covariance matrices initialized only in the unstable-neutral BLVs.

The convergence of the Kalman smoother error covariances onto the span of the unstable-neutral BLVs, and stability of low rank initializations, was established by Bocquet \& Carrassi \cite{bocquet2017four}; this latter work also numerically extended this relationship to weakly nonlinear dynamics and ensemble-variational methods.  The works of Bocquet et al. \cite{bocquet2017degenerate} and Bocquet \& Carrassi \cite{bocquet2017four} relied upon the \emph{sufficient} hypothesis that the span of the unstable and neutral BLVs remained uniformly-completely observed.  This hypothesis has recently been refined to a \emph{necessary and sufficient} criterion for the exponential stability of continuous time filters, in perfect models, in terms of the detectability of the unstable-neutral subspace \cite{frank2017detectability}.  

The present study is concerned with extending the limits of the results developed in deterministic dynamics (perfect models), now to the presence of stochastic model errors.  This manuscript and its sequel \cite{grudzien2018inflation} seek to: (i) determine the extent to which stable dynamics confine the uncertainty in the sequential state estimation problem in models with additive noise, and (ii) to use these results to interpret the properties, and suggest design, of ensemble-based Kalman filters with model error.  This manuscript studies the asymptotic properties of the full rank, theoretical Kalman filter \cite{kalman1960}, and the unfiltered errors in the stable BLVs.  The sequel \cite{grudzien2018inflation} utilizes these results to interpret filter divergence for reduced rank, ensemble-based Kalman filters.

In \cref{section:QR} we present a detailed introduction to the BLVs.  In section \cref{section:bounds} develop novel bounds on the forecast error covariance, describing the evolution of uncertainty as the growth of error, due to dynamic instability and model imprecision, with respect to the constraint of observations.  Together, the rate of dynamic instability and the observational precision form an inverse relationship which we use to characterize the boundedness of forecast errors.  In \cref{cor:lowerbndspread} and \cref{cor:tvnecessary}, we prove a necessary criterion for filter boundedness in autonomous and time varying systems: the observational precision, relative to the background uncertainty, must be greater than the leading instability which forces the model error.  Our results derive from the bounds provided in \cref{prop:autupper} for autonomous dynamics and \cref{prop:tvbnd} for time varying systems.  An important consequence is that under generic assumptions, forecast errors in the span of the stable BLVs remain uniformly bounded \emph{independently} of filtering.  Described in \cref{cor:stablebnd}, this extends the intuition of AUS now to the presence of model errors: filters need only target corrections to the span of the unstable and neutral BLVs to maintain bounded errors.  

However, the intuition of AUS needs additional qualifications when interpreting the role of model errors in reduced rank filters. Unlike perfect models, uncertainty in the stable  BLVs does not generically converge to zero as a consequence of re-introducing model errors.  Moreover, while stability guarantees that unfiltered errors remain uniformly bounded in the stable BLVs, the uncertainty may still be impractically large due: even when a Lyapunov exponent is strictly negative, positive realizations of the \emph{local} Lyapunov exponents can force transient instabilities which strongly amplify the forecast uncertainty.  The impact of stable modes on forecast uncertainty differs from similar results for nonlinear, perfect models by Ng. et. al. \cite{ng2011role}, and Bocquet et. al. \cite{bocquet2015expanding}, where the authors demonstrate the need to correct stable modes in the ensemble Kalman filter due to sampling errors induced by nonlinearity.  Likewise, this differs from the EKF-AUS-NL of Palatella \& Trevisan \cite{palatella2015}, that accounts for truncation errors in the estimate of the forecast uncertainty in nonlinear models.  In \cref{section:localvariability}, we derive the mechanism for the transient instabilities amplifying perturbations as a linear effect in the presence of model errors.  We furthermore provide a computational framework to study the variance of these perturbations.

In \cref{section:kferr}, we study the filter boundedness and stability criteria of Bocquet et al. \cite{bocquet2017degenerate} and Frank \& Zhuk \cite{frank2017detectability} in their relation to bounding forecast errors in imperfect models.  Likewise, we explore their differences in the context of dynamically selecting observations, similar to the work of Law et al. \cite{law2016filter}.  With respect to several observational designs as benchmarks, we numerically demonstrate that the \emph{unconstrained growth} of errors in the stable BLVs of high variance can be impractically large compared to the uncertainty of the full rank Kalman filter.  
These results have strong implications for ensemble-based filtering in geosciences and weather prediction, where ensemble sizes are typically extremely small relative to the model dimension.  In perfect models, an ensemble size large to correct the small number unstable and neutral modes might suffice.  However, our results suggest the need to further increase the rank of ensemble-based gains.  The significance of this result for ensemble-based Kalman filters and their divergence is further elaborated on in the sequel \cite{grudzien2018inflation}.

\section{Linear state estimation}

The purpose of recursive data assimilation is estimating an unknown state with a sequential flow of partial and noisy observations; we make the simplifying assumption that the dynamical
and observational models are both linear and the error distributions are Gaussian.  In this setting, given a Gaussian distribution for the initial state, the distribution of the estimated state is Gaussian at all times.  Formulated as a Bayesian inference problem, we seek to estimate the distribution of the random vector $\bx_k \in \mathbb{R}^n$ evolved via a linear Markov model,
\begin{align}\label{eq:dynmodel}
 \bx_{k} &= \bM_{k}\bx_{k-1} + \bw_k ,
\end{align} 
with observations $\by_k \in \mathbb{R}^d$ given as
\begin{align}
\by_{k} &= \bH_k \bx_k + \bv_k.  \label{eq:obsmodel}
 \end{align}
The model variables and observation vectors are related via
the linear observation operator $\bH_k: {\mathbb R}^n \mapsto {\mathbb R}^d$.  Let $\bI_{n}$ denote the $n \times n$ identity matrix.  We denote the model propagator from time $t_{l-1}$ to time $t_{k}$ as $\bM_{k:l} \triangleq \bM_k \cdots \bM_l$, where $\bM_{k:k} \triangleq \bI_n$.

For all $k, l \in \mathbb{N}$, the random vectors of model and observation noise, $\bw_k, \bw_l\in \mathbb{R}^n$ and $\bv_k,\bv_l\in\mathbb{R}^d$, are assumed mutually independent, unbiased, Gaussian white
sequences.  Particularly, we define
\begin{equation}
   \mathbb{E}[\bv_k\bv_l^\T] = \delta_{k,l}\bR_k \quad \text{ and} \quad  \mathbb{E}[\bw_k\bw_l^\T] = \delta_{k,l}\bQ_k,
\end{equation}
where $\mathbb{E}$ is the expectation, $\bR_k\in {\mathbb R}^{d\times d}$ is the observation error covariance matrix at time $t_k$, and $\bQ_k \in
{\mathbb R}^{n\times n}$ stands for the model error covariance matrix. The error covariance matrix $\bR_k$ can be
assumed invertible without losing generality.  For simplicity we assume the dimension of the observations $d\leq n$ will be fixed.  

For two positive semi-definite matrices, $\bA$ and $\bB$, the partial ordering is defined
$\bB \leq \bA$ if and only if $\bA- \bB$ is positive semi-definite. To avoid pathologies, we assume that the model error and the observational error covariance matrices are uniformly bounded, i.e., there are constants $q_{\inf}, q_{\sup}, r_{\inf}, r_{\sup} \in \mathbb{R}$ such that for all $k$,
\begin{align}
\0 &\leq q_{\inf}\bI_n \leq \bQ_k \leq q_{\sup} \bI_n,\\
\0 &< r_{\inf} \bI_d \leq \bR_k \leq r_{\sup} \bI_d.
\end{align}

Rather than explicitly computing the evolution of the distribution for $\bx_k$, the Kalman filter computes the forecast and posterior distributions parametrically via recursive equations for the mean and covariance of each distribution. 
\begin{mydef}
The forecast error covariance matrix $\bP_k$ of the Kalman filter satisfies the discrete-time
dynamic Riccati equation \emph{\cite{kalman1960}}
\begin{align}
\label{eq:recurrence}
  \bP_{k+1} = \bM_{k+1}\(\bI_n+\bP_k \bOmega_k\)^{-1}\bP_k\bM_{k+1}^{\T} +\bQ_{k+1} ,
\end{align}
where $\bOmega_k \triangleq \bH_k^{\T}\bR_k^{-1}\bH_k$ is the \textbf{precision matrix} of the observations.  
\end{mydef}

Equation \cref{eq:recurrence} expresses the error covariance matrix, $\bP_{k+1}$, as the result of a two-step
process: (i) the \emph{assimilation} at time $t_k$ yielding the analysis error covariance,
\begin{align}
\label{eq:kfupdate} 
\bP^{a}_k = \(\bI_n+\bP_k \bOmega_k\)^{-1}\bP_k;
\end{align}
and (ii) the \emph{forecast}, where the analysis error covariance is forward propagated by
\begin{align}
\label{eq:kfforecast} 
\bP_{k+1} = \bM_{k+1}\bP^{ a}_k\bM_{k+1}^{\T} + \bQ_{k+1} .
\end{align}
Assuming that the filter is unbiased, such that the initial error is mean zero, it is easy to demonstrate that the forecast and analysis error distributions are mean zero at all times.  In this context, the covariances $\bP_k,\bP^a_k$ represent the uncertainty of the state estimate defined by the filter mean.  As we will focus on the evolution of the covariances, we neglect the update equations for the mean state and refer the reader to Jazwinski \cite{jazwinski1970} for a more complete discussion.

The classical conditions for the 
boundedness of filter errors, and the independence of the asymptotic filter behavior from its initialization, are given in terms of observability and controllability.  Observability is the condition that given finitely many observations, the initial state of the system can be reconstructed.
Controllability describes the ability to move the system from any
initial state to a desired state given a finite sequence of control actions --- in our case the moves are the realizations of model error.  These conditions are described in the following definitions, beginning with the information and controllability matrices.
\begin{mydef}\label{def:obscontmat}
We define $\bPhi_{k:j}$ to be the time varying \textbf{information matrix} and $\bUpsilon_{k:j}$ to be the time varying \textbf{controllability matrix}, where
\begin{align}
\bPhi_{k:j} &\triangleq \sum_{l=j}^{k} \bM_{k:l}^{-\T} \bOmega_l \bM_{k:l}^{-1}, &  &
\bUpsilon_{k:j}\triangleq \sum_{l=j}^{k} \bM_{k:l} \bQ_l \bM_{k:l}^\T.
\end{align}
For $\gamma\geq 0$ let us define the \textbf{weighted controllability matrix} as
\begin{align}
\bXi^\gamma_{k:j} \triangleq \sum_{l=j}^k \(\frac{1}{1+\gamma}\)^{k-l} \bM_{k:l} \bQ_l \bM_{k:l}^\T.
\end{align}
\end{mydef}
Note that, $\bXi^0_{k:j} \equiv \bUpsilon_{k:j}$.  We recall from section 7.5 of Jazwinski \cite{jazwinski1970} the definitions of uniform complete observability (respectively controllability).
\begin{mydef}\label{def:obscont}
Suppose there exists $N_\Phi, a,b > 0$ independent of $k$ such that $k> N_\Phi$ implies
\begin{align}
0< a \bI_n \leq \bPhi_{k:k - N_\Phi} \leq b \bI_n,
\end{align}
then the system is \textbf{uniformly completely observable}.  Likewise suppose there exists $N_{\Upsilon}, a,b >0$ independent of $k$ for which $k> N_\Upsilon$ implies
\begin{align}
0< a \bI_n \leq \bUpsilon_{k:k- N_\Upsilon} \leq b \bI_n, \label{eq:controlbound}
\end{align}
then the system is \textbf{uniformly completely controllable}.
\end{mydef}

\begin{hyp}\label{hyp:obscont}
Assume that the system of equations \cref{eq:dynmodel} and \cref{eq:obsmodel} is 
\begin{enumerate}[(a)]
\item uniformly completely observable; 
\item uniformly completely controllable.
\end{enumerate}  
\end{hyp}
\begin{rmk}
We will explicitly refer to \cref{hyp:obscont} whenever it is used.  When we refer \cref{hyp:obscont} alone, we refer to both parts $(a)$ and $(b)$.  At times, we will explicitly only use either part (a) or (b) of \cref{hyp:obscont}.
\end{rmk}
\begin{thrm}\label{thrm:boundedstable}
Suppose the system of equations \cref{eq:dynmodel} and \cref{eq:obsmodel} satisfies \cref{hyp:obscont} and $\bP_0 >0$.  Then there exists constants $p^a_{\inf}$ and $p^a_{\sup}$ independent of $k$ such that the analysis error covariance is uniformly bounded above and below,
\begin{align}
0 < p^a_{\inf} \bI_n \leq \bP^a_k \leq p^a_{\sup} \bI_n < \infty.
\end{align} 
Given any two initializations of the prior error covariance $\bP_0,\widehat{\bP}_0>0$, with associated sequences of analysis error covariances $\bP^a_k,\widehat{\bP}^a_k$, the covariance sequences converge, $\lim_{k\rightarrow \infty} \l|\bP^a_k - \widehat{\bP}^a_k \r| = 0 $,  exponentially in $k$.
\end{thrm}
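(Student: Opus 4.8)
The statement packages two classical facts about the Riccati recursion \cref{eq:recurrence} under \cref{hyp:obscont}: a uniform two-sided bound (the Kalman--Bucy boundedness theorem) and forgetting of the initialization (filter stability). The plan is to prove the bound on $\bP^a_k$ by first bounding the forecast covariance $\bP_k$ and then transferring through the information-form update $(\bP^a_k)^{-1}=\bP_k^{-1}+\bOmega_k$, which is equivalent to \cref{eq:kfupdate}, and to prove convergence by reducing it to the exponential decay of the closed-loop filter transitions.

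For the upper bound I would use part (a) of \cref{hyp:obscont}. By optimality of the Kalman filter, $\bP_k$ is dominated by the error covariance of any suboptimal estimator; comparing against the estimator that reconstructs $\bx_k$ from only the most recent $N_\Phi$ observations, uniform complete observability bounds the reconstruction error by roughly $\bPhi_{k:k-N_\Phi}^{-1}\leq\tfrac{1}{a}\bI_n$, which together with the uniformly bounded accumulated model error gives $\bP_k\leq p_{\sup}\bI_n$ for $k>N_\Phi$ (cf.\ Jazwinski \cite{jazwinski1970}, section 7.5). This also yields $\bOmega_k\leq\bPhi_{k:k-N_\Phi}\leq b\bI_n$. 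Then $(\bP^a_k)^{-1}=\bP_k^{-1}+\bOmega_k\geq\tfrac{1}{p_{\sup}}\bI_n$ gives the upper bound $\bP^a_k\leq p^a_{\sup}\bI_n$.

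For the lower bound I would use part (b) together with the upper bound just obtained. Writing the closed-loop transition $\bGamma_k\triangleq\bM_{k+1}(\bI_n+\bP_k\bOmega_k)^{-1}=\bM_{k+1}(\bI_n-\bK_k\bH_k)$, the Joseph form of the update yields the exact identity
\begin{equation}\label{eq:josephsketch}
\bP_{k+1}=\bGamma_k\bP_k\bGamma_k^\T+\bM_{k+1}\bK_k\bR_k\bK_k^\T\bM_{k+1}^\T+\bQ_{k+1},
\end{equation}
so $\bP_{k+1}\geq\bGamma_k\bP_k\bGamma_k^\T+\bQ_{k+1}$. Unrolling this over a controllability window bounds $\bP_k$ below by a $\bGamma$-weighted controllability Gramian; because the upper bound forces $\bI_n-\bK_k\bH_k=(\bI_n+\bP_k\bOmega_k)^{-1}\geq(1+p_{\sup}b)^{-1}\bI_n$, each of the $N_\Upsilon$ factors contributes a fixed positive constant, so this Gramian dominates a positive multiple of $\bUpsilon_{k:k-N_\Upsilon}\geq a\bI_n$. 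Hence $\bP_k\geq p_{\inf}\bI_n$, and $(\bP^a_k)^{-1}=\bP_k^{-1}+\bOmega_k\leq(\tfrac{1}{p_{\inf}}+b)\bI_n$ gives $\bP^a_k\geq p^a_{\inf}\bI_n$. The delicate point is precisely that model error is only re-injected over a full window while observations subtract information at every step; it is the bounded observation precision $\bOmega_k\leq b\bI_n$ (i.e.\ $r_{\inf}>0$) that prevents the subtraction from outrunning controllability, and this is the first genuine obstacle.

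For the convergence of two solutions I would subtract the two recursions. Using $\bP^a=(\bP^{-1}+\bOmega)^{-1}$ and the resolvent identity, the additive $\bQ$ cancels and a short computation gives $\bP_{k+1}-\widehat{\bP}_{k+1}=\bGamma_k(\bP_k-\widehat{\bP}_k)\wbGamma_k^\T$, with $\wbGamma_k$ built from $\widehat{\bP}_k$; iterating,
\begin{equation}\label{eq:diffsketch}
\bP_k-\widehat{\bP}_k=(\bGamma_{k-1}\cdots\bGamma_0)(\bP_0-\widehat{\bP}_0)(\wbGamma_{k-1}\cdots\wbGamma_0)^\T.
\end{equation}
The claim thus reduces to exponential decay of the products of closed-loop transitions. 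I would establish this by a Lyapunov argument with $V_k(\bx)=\bx^\T\bP_k^{-1}\bx$: inverting $\bGamma_k\bP_k\bGamma_k^\T\leq\bP_{k+1}$ from \cref{eq:josephsketch} gives $\bGamma_k^\T\bP_{k+1}^{-1}\bGamma_k\leq\bP_k^{-1}$, and accumulating the strictly positive observation and model-error contributions of \cref{eq:josephsketch} over an observability/controllability window upgrades this to a strict bound $\bGamma_{k+N-1}\cdots\bGamma_k\,\bP_k\,(\bGamma_{k+N-1}\cdots\bGamma_k)^\T\leq\rho\,\bP_{k+N}$ with $\rho<1$ (here $\bP_{k+N}\leq p_{\sup}\bI_n$ converts the uniform additive gap $\geq d\bI_n$ into the multiplicative factor $\rho=1-d/p_{\sup}$). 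Since $V_k$ then contracts by $\rho$ per window and $\bP_k^{-1}$ is two-sidedly bounded, the ordered products decay geometrically; the same holds for $\wbGamma$, so \cref{eq:diffsketch} and the norm-one prefactors in $\bP^a_k-\widehat{\bP}^a_k=(\bI_n+\bP_k\bOmega_k)^{-1}(\bP_k-\widehat{\bP}_k)(\bI_n+\bOmega_k\widehat{\bP}_k)^{-1}$ yield exponential convergence of $\l|\bP^a_k-\widehat{\bP}^a_k\r|$. Extracting the strict factor $\rho<1$ over the window, rather than the non-strict one-step inequality, is the main obstacle; the uniform boundedness from the first part is exactly what makes the conversion go through.
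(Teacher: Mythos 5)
The paper does not supply its own proof of this theorem: it defers entirely to the classical literature, citing Theorem 7.4 of Jazwinski \cite{jazwinski1970} and Bougerol's random-matrix generalization \cite{bougerol1993,bougerol1995}. Your sketch is a faithful reconstruction of exactly that classical argument --- uniform complete observability and a suboptimal-estimator comparison for the upper bound, uniform complete controllability fed through the Joseph-form recursion for the lower bound, and geometric contraction of the closed-loop transition products for the forgetting of the initialization --- so it takes the same route as the proof the paper relies on.
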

These are classical results of filter stability, see for example Theorem 7.4 of Jazwinski \cite{jazwinski1970}, or Bougerol's work with random matrices \cite{bougerol1993, bougerol1995} for a generalization.    

The square root Kalman filter is a reformulation of the recurrence in equation \cref{eq:recurrence} which is used to reduce computational cost and obtain superior numerical precision and stability over the standard implementations see, e.g., \cite[and references therein]{tippett2003}.  The advantage of this formulation to be used in our analysis is to explicitly represent the recurrence in equation \cref{eq:recurrence} in terms of positive semi-definite, symmetric matrices.

\begin{mydef}\label{def:squareroot}
Let $\bP_k$ be a solution to the time varying Riccati equation \cref{eq:recurrence} and define $\bX_k \in \mathbb{R}^{n\times n}$ to be a Cholesky factor of $\bP_k$, such that
\begin{align}\label{eq:cholesky}
\bP_k = \bX_k \bX^\T_k.
\end{align}  
\end{mydef}

The root $\bX_k$ in equation \cref{eq:cholesky} can be interpreted as an ensemble of anomalies about the mean as in the ensemble Kalman filter \cite{evensen2009data,asch2016data}.  In operational conditions, it is standard that the forecast error distribution is approximated with a sub-optimal, reduced rank surrogate \cite{chandrasekar2008}.  Using a reduced rank approximation, the estimated covariance and exact error covariance are not equal, and this can lead to the systematic underestimation of the uncertainty \cite{grudzien2018inflation}.  However, in the following we will assume that $\bX_k$ is computed as an exact root.  The sequel to this work explicitly treats the case of reduced rank, sub-optimal filters \cite{grudzien2018inflation}.

\begin{mydef}\label{def:alphabeta}
We order singular values $\sigma_1 > \cdots > \sigma_n$ such that,
\begin{align}\label{eq:sigmabound}
&0 \leq \sigma_n\( \bX_k^\T \bOmega_k \bX_k \) \bI_n \leq \bX_k^\T \bOmega_k \bX_k \leq  \sigma_1\(\bX_k^\T \bOmega_k \bX_k\) \bI_n <\infty.
\end{align}
We define
\begin{align}
&\alpha\triangleq \inf_k \left\{\sigma_n\(\bX_k^\T \bOmega_k \bX_k\)\right\} \geq 0, & 
&\beta\triangleq \sup_k \left\{\sigma_1\(\bX_k^\T \bOmega_k \bX_k\)\right\} \leq \infty,
\end{align}
and we write $0\leq \alpha \bI_n \leq \bX_k^\T \bOmega_k \bX_k \leq \beta \bI_n \leq \infty$ for all $k$.
\end{mydef}

Equation \cref{eq:sigmabound} is closely related to the singular value analysis of the precision matrix by Johnson et al. \cite{johnson2005singular} and the analysis of the conditioning number for the Hessian of the variational cost function by Haben et al. \cite{haben2011conditioning} and Tabeart et al. \cite{tabeart2018conditioning}.  These works study the information gain from observations, relative to the background uncertainty, due to the assimilation step.  The primary difference between these earlier works and our study here is that the background error covariance is static in these variational formulations, while in the present study the root $\bX_k$ is flow dependent.  In this flow dependent context, the constant $\alpha$ (respectively $\beta$) is interpreted as the minimal (respectively maximal) observational precision relative to the maximal (respectively minimal) background forecast uncertainty.  The constant $\alpha$ is nonzero if and only if the principal angles between the column span of $\bX_k$ and the kernel of $\bH_k$ are bounded uniformly below.  Generally, we thus take $\alpha=0$ unless observations are full dimensional.  A nonzero value for $\alpha$ can be understood as an ideal scenario.

Using \cref{def:squareroot} and the matrix shift lemma \cite[see Appendix C]{bocquet2017degenerate} we re-write the forecast Riccati equation \cref{eq:recurrence} as 
\begin{align}
  \bP_k &= \bM_k(\bI_n+ \bP_{k-1}\bOmega_{k-1})^{-1}\bP_{k-1}\bM_k^\T + \bQ_k \\
&= \bM_k \bX_{k-1}(\bI_n + \bX^\T_{k-1} \bOmega_{k-1} \bX_{k-1})^{-1}\bX^\T_{k-1}\bM_k^\T + \bQ_k \label{eq:riccatitv}
\end{align}
from which we infer
\begin{align}
\frac{1}{1+ \beta} \bM_k \bP_{k-1}\bM_k^\T + \bQ_k \leq \bP_k \leq \frac{1}{1+ \alpha} \bM_k \bP_{k-1}\bM_k^\T + \bQ_k.
\end{align}
Iterating on the above inequality, we obtain the recursive bound
\begin{align}
\(\frac{1}{1+ \beta}\)^k \bM_{k:0} \bP_{0}\bM_{k:0}^\T + \bXi^{\beta}_{k:1} \leq \bP_k \leq \(\frac{1}{1+ \alpha}\)^k \bM_{k:0} \bP_{0}\bM_{k:0}^\T + \bXi^{\alpha}_{k:1}.\label{eq:nonautrecursion}
\end{align}
\begin{rmk}
Equation \cref{eq:nonautrecursion} holds if there is no filtering step, setting $\beta = \alpha =0 $.
\end{rmk} 

The bounds in equation \cref{eq:nonautrecursion} explicitly describe the previously introduced uncertainty as dynamically evolved to time $k$, relative to the constraint of the observations.  We will utilize the BLVs vectors to extract the dynamic information from the sequences of matrices $\bM_{k:l}, \bM_{k:l}^\T$.

\section{Lyapunov vectors}
\label{section:QR}

This section contains a short introduction to Lyapunov vectors and the Multiplicative Ergodic Theorem (MET).  For a more comprehensive introduction, there are many excellent resources at different levels of complexity, see for example \cite{adrianova1995introduction, legras1996, barreira2002, barreira2013, Kuptsov2012, froyland2013computing}.  There is inconsistent use of the terminology for Lyapunov vectors in the literature, so we choose to use the nomenclature of Kuptsov \& Parlitz \cite{Kuptsov2012} for its accessibility and self-consistency.

Consider the growth or decay of an arbitrary, norm one vector $v_0 \in \mathbb{R}^n$ to its state at time $t_k$ via the propagator $\bM_{k:0}$.  This is written as 
\begin{align}
\l| v_k \r| =  \l| \bM_{k:0} v_0 \r| =  \sqrt{   v_0^\T \bM_{k:0}^\T \bM_{k:0} v_0  },\label{eq:forwardgrowth}
\end{align}
so that the eigenvectors of the matrix $\bM_{k:0}^\T \bM_{k:0}$ describe the principal axes of the ellipsoid defined by the unit disk evolved to time $t_k$.  Using the above relationship for the reverse time model, we see the growth or decay of the unit disk in reverse time as
\begin{align}
\l| u_{-k} \r| =  \l| \bM^{-1}_{0:-k} u_0 \r| = \sqrt{   u_0^\T \bM_{0:-k}^{-\T} \bM^{-1}_{0:-k} u_0  }. \label{eq:backwardgrowth}
\end{align}
The principal axes of the past ellipsoid that evolves to the unit disk at the present time are thus precisely the eigenvectors of the matrix $\bM^{-\T}_{0:-k}\bM^{-1}_{0:-k}$.
There is no guarantee in general that there is consistency between the asymptotic forward and reverse time growth and decay rates, i.e., in equations 
\cref{eq:forwardgrowth} and \cref{eq:backwardgrowth} as $k \rightarrow \infty$.  Generally, models may have Lyapunov spectrum defined as intervals of lower and upper growth rates, see e.g., Dieci \& Van Vleck \cite{dieci2002lyapunov, dieci2007lyapunov}.  However, as we are motivated by the tangent-linear model for a nonlinear system, we may assume some ``regularity'' in the dynamics.

The anti-symmetry of the forward/reverse time, regular and adjoint models' growth and decay is known as Lyapunov-Perron regularity (\textbf{LP-regularity}) \cite{barreira2013}, and is equivalent to the classical Oseledec decomposition \cite{barreira2002}[see Theorem 2.1.1].  LP-regularity guarantees that: (i) the Lyapunov exponents are well defined for the linear model as point-spectrum; (ii) the linear space is decomposable into subspaces that evolve covariantly with the linear propagator; and (iii) each such subspace asymptotically grows or decays according to one of the point-spectrum rates.  We summarize the essential results of Oseledec's theorem for use in our work in the following, see Theorem 2.1.1 of Barreira \& Pesin \cite{barreira2002} for a complete statement.

\begin{thrm}[Oseledec's Theorem]
The model $\bx_k = \bM_k \bx_{k-1}$ is LP-regular if and only if there exists real numbers $\lambda_1 > \cdots> \lambda_p$, for $1 \leq p \leq n$, and subspaces $\E_k^i \subset \mathbb{R}^n$, $\dim\(\E^i_k\) = \kappa_i$, such that for every $k,l \in \mathbb{Z}$
\begin{equation}\begin{matrix}
\bigoplus_{i=1}^p \E^i_k = \mathbb{R}^n & & \bM_{k \pm l:k } \E^i_k = \E^i_{k \pm l},
\end{matrix}\label{eq:oseledecsplitting}\end{equation}
and $\bv\in\E^i_k$ implies
    \begin{align}    \label{eq:covariantgrowth}
    \lim_{l\rightarrow \infty} \frac{1}{ l } \log(\l| \bM_{k \pm l:k} \bv \r|) = \pm \lambda_i .
    \end{align}
\end{thrm}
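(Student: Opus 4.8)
The plan is to prove the two implications separately; the forward implication is the substance of the Multiplicative Ergodic Theorem, while the converse is bookkeeping with an adapted basis. For the hard direction, assume LP-regularity and construct the splitting from the two-sided singular-value structure of the propagators. The anchor object is the forward Oseledec matrix
\begin{align}
\bLambda^+_k \triangleq \lim_{l\to\infty}\(\bM_{k+l:k}^\T \bM_{k+l:k}\)^{\frac{1}{2l}},
\end{align}
a symmetric positive-definite matrix whose distinct eigenvalues are $e^{\lambda_1} > \cdots > e^{\lambda_p}$ and whose eigenspaces, grouped by common rate, generate a decreasing forward filtration $V^1_k \supseteq \cdots \supseteq V^p_k \supseteq \{\0\}$ with $\bv \in V^i_k \setminus V^{i+1}_k$ expanding at forward rate $\lambda_i$. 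I expect the existence of this limit to be the main obstacle: the logarithms of the singular values of $\bM_{k+l:k}$ need not converge for an arbitrary deterministic sequence, and it is precisely here that regularity (or, in the cocycle picture motivating the tangent-linear model, a measure-preserving base together with the subadditive ergodic theorem applied to the exterior powers $\bigwedge^j \bM_{k+l:k}$) forces the $\limsup$ defining each exponent to be a genuine limit.

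Running the identical construction on the reverse-time cocycle $\bM^{-1}_{0:-k}$ produces an increasing backward filtration $W^p_k \subseteq \cdots \subseteq W^1_k$, whose rates are the $-\lambda_i$ under regularity. I would then define the candidate Oseledec subspaces as the intersections $\E^i_k \triangleq V^i_k \cap W^i_k$ and verify the direct-sum decomposition \cref{eq:oseledecsplitting}. This transversality of the two filtrations is exactly what LP-regularity buys: the vanishing of the Perron regularity coefficient, equivalently the identity $\sum_{i=1}^p \kappa_i \lambda_i = \lim_{l\to\infty}\tfrac{1}{l}\log|\det \bM_{k+l:k}|$, rules out any dimensional defect between the forward and backward flags and yields $\dim \E^i_k = \kappa_i$ together with the sharp two-sided growth asserted in \cref{eq:covariantgrowth}. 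Covariance of the splitting, $\bM_{k\pm l:k}\E^i_k = \E^i_{k\pm l}$, then follows directly from the cocycle identity $\bM_{k+l:k+m}\bM_{k+m:k} = \bM_{k+l:k}$, which carries each filtration at time $k$ onto the corresponding filtration at the shifted time and hence respects the intersections.

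For the converse, assume the splitting together with \cref{eq:covariantgrowth} and recover LP-regularity by choosing a basis of $\mathbb{R}^n$ subordinate to $\bigoplus_{i=1}^p \E^i_k$. Along such a basis the forward and backward exponents are $\lambda_i$ and $-\lambda_i$ respectively, so the forward and backward exponent sums agree in magnitude and match $\lim_{l\to\infty}\tfrac{1}{l}\log|\det\bM_{k+l:k}|$; this vanishing of the regularity coefficient is the definition of Lyapunov--Perron regularity. The residual check --- that the rates realized on $\bigoplus_i \E^i_k$ exhaust the Lyapunov spectrum with multiplicities $\kappa_i = \dim \E^i_k$ --- is immediate from the direct-sum structure and the fact that growth along a sum is governed by its fastest summand.
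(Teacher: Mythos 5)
The paper does not actually prove this statement: it is quoted, with adapted notation, from Theorem 2.1.1 of Barreira \& Pesin \cite{barreira2002} (the text says explicitly that it ``summarizes'' that result), so there is no in-paper argument to compare yours against. Judged on its own terms, your outline follows the standard textbook construction --- forward and backward filtrations extracted from the far-future and far-past operators (the paper's $\bW^{\pm}(k)$ of \cref{def:faroperators}), Oseledec spaces as intersections of the two flags, transversality and the dimension count from the vanishing of the regularity coefficient, covariance from the cocycle identity --- and that skeleton is the right one.

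Two concrete cautions. First, the appeal to a measure-preserving base and the subadditive ergodic theorem is misplaced here: the statement is a purely deterministic equivalence for a single matrix sequence, and the ergodic machinery belongs to \cref{thrm:MET}, which is the separate assertion that LP-regularity holds almost everywhere. Under the \emph{hypothesis} of LP-regularity, the existence of the limit defining $\bLambda^+_k$ must be derived deterministically --- from forward and backward regularity of the exterior powers, equivalently from convergence of the volume growth rates on the members of the forward flag, which is exactly what the regularity coefficient controls. Your parenthetical ``regularity forces the $\limsup$ to be a genuine limit'' is the correct clause; the ergodic alternative should be deleted rather than offered as a substitute, since no invariant measure is in the hypotheses. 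Second, in the converse you pass from the pointwise two-sided limits \cref{eq:covariantgrowth} to the determinant asymptotics $\lim_{l\to\infty}\tfrac{1}{l}\log\lvert\det\bM_{k+l:k}\rvert=\sum_i\kappa_i\lambda_i$. The upper bound follows from single-vector growth rates along an adapted basis, but the matching lower bound requires that the angles between the evolving subspaces $\bM_{k+l:k}\E^i_k$ decay at most sub-exponentially, and that does not follow from \cref{eq:covariantgrowth} alone. The complete statement in \cite{barreira2002} carries exactly such uniformity and volume conditions, which the paper's abbreviated version suppresses; your converse either needs those hypotheses restored or needs the angle estimate supplied explicitly.
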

\begin{mydef}\label{def:lyapexps}
For $p\leq n$, the \textbf{Lyapunov spectrum} of the system \cref{eq:dynmodel} is defined as the set 
$\{\lambda_i : \kappa_i \}_{i=1}^p$
where $\lambda_1 > \cdots > \lambda_p$ and $\kappa_i$ corresponds to the multiplicity (degeneracy) of the exponent $\lambda_i$.
We separate \emph{non-negative} and \emph{negative} exponents, $\lambda_{n_0} \geq 0 > \lambda_{n_0 +1}$, such that each index $i> n_0$ corresponds to a stable exponent.  The subspaces $\E^i_k$ are denoted \textbf{Oseledec spaces}, and the decomposition of the model space into the direct sum is denoted Oseledec splitting.
\end{mydef}

 For arbitrary linear systems LP-regularity is not a generic property --- it is the MET that shows that this is a typical scenario for a wide class of nonlinear systems.  A point will be defined to be LP-regular if the tangent-linear model along its evolution is LP-regular.  We state a classical version of the MET \cite[see Theorem 2.1.2 and the following discussion]{barreira2002} but note that there are more general formulations of this result and more general forms of the associated covariant-subspace decompositions.  These results go beyond the current work, see e.g., Froyland et al. \cite{froyland2013computing} and Dieci et al. \cite{dieci2007lyapunov,dieci2010exponential} for a stronger version of the MET and related topics.

\begin{thrm}[Multiplicative Ergodic Theorem]\label{thrm:MET}
If $f$ is a $C^1$ diffeomorphism of a compact, smooth, Riemannian manifold $M$, the set of points in $M$ which are LP-regular has measure 1 with respect to any $f$-invariant Borel probability measure $\nu$ on $M$.  If $\nu$ is ergodic, then the Lyapunov spectrum is constant with $\nu$-probability 1.
\end{thrm}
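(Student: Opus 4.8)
The plan is to recognize this as the classical Oseledets theorem applied to the derivative cocycle of $f$, and to organize the argument around Kingman's subadditive ergodic theorem. First I would fix the cocycle: for $x\in M$ write $A(x)\triangleq D_xf$ acting between tangent spaces, and let $A^{(n)}(x)\triangleq D_xf^n = A(f^{n-1}x)\cdots A(x)$ be the $n$-fold product, which along an orbit is exactly the tangent-linear propagator $\bM_k$. Because $M$ is compact and $f$ is a $C^1$ diffeomorphism, $\l|\Lambda^1 A\r|$ and the corresponding norm of the inverse are bounded, so $\log^+\|A^{\pm 1}\|$ is $\nu$-integrable for every $f$-invariant $\nu$; this integrability is the only hypothesis Kingman's theorem requires. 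Since the $\E^i_k$ of the statement arise by intersecting a forward-in-time filtration with a backward-in-time filtration, I would carry out the entire construction twice, once for $A^{(n)}$ and once for the inverse cocycle, and then patch the two together.

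The existence of the Lyapunov spectrum is the first substantive step. The sequence $\phi^{(k)}_n(x)\triangleq \log\l|\Lambda^k A^{(n)}(x)\r|$, built from the $k$-th exterior power, is subadditive along the orbit of $f$ by multiplicativity of $\Lambda^k$, so Kingman's theorem yields a $\nu$-a.e. limit $\mu_k(x)=\lim_n \tfrac1n\phi^{(k)}_n(x)$, and $\mu_k$ is an $f$-invariant measurable function. Since $\l|\Lambda^k A^{(n)}\r|$ equals the product of the $k$ largest singular values of $A^{(n)}$, the differences $\mu_k-\mu_{k-1}$ recover the ordered growth rates; collecting equal values and their multiplicities produces the numbers $\lambda_1>\cdots>\lambda_p$ and degeneracies $\kappa_i$ of \cref{def:lyapexps}, together with the covariant growth statement \cref{eq:covariantgrowth} at the level of singular values.

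The main obstacle is promoting these scalar limits to the geometric Oseledec splitting \cref{eq:oseledecsplitting}. Here I would establish existence of the limiting symmetric operator $\Lambda_x=\lim_{n\to\infty}\big((A^{(n)}(x))^{*}A^{(n)}(x)\big)^{1/2n}$, whose eigenvalues are $e^{\lambda_i}$ and whose eigenspaces assemble the forward filtration. The delicate point is convergence of the singular subspaces of $A^{(n)}$: Kingman controls only norms, whereas here one must show that whenever consecutive singular values separate at a definite exponential rate (guaranteed off a null set by the strict inequalities $\lambda_i>\lambda_{i+1}$), the associated singular subspaces form a Cauchy sequence in the Grassmannian with geometrically small angle increments. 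This perturbation-of-invariant-subspaces estimate is Oseledets' original analytic contribution, and is where essentially all the work lives. Running the same argument for the backward cocycle gives a second, reversed filtration; transversality of the forward and backward filtrations on a full-measure set yields the direct sum $\bigoplus_i\E^i_k=\mathbb{R}^n$, and equivariance $\bM_{k\pm l:k}\E^i_k=\E^i_{k\pm l}$ follows from the cocycle identity applied to $Df$.

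It remains to assemble the measure statements. LP-regularity holds at every point lying in the intersection of the full-measure sets on which all forward and backward Kingman limits exist and the two filtrations are transverse; a countable intersection of full-measure sets still has measure $1$, giving the first assertion. For the second, each $\lambda_i$ and each dimension $\kappa_i$ is, by construction, an $f$-invariant measurable function, so if $\nu$ is ergodic then every such function is $\nu$-a.e. constant and the Lyapunov spectrum is constant with probability one. I expect the subspace-convergence estimate of the third paragraph to be the sole genuine difficulty, the remaining steps being bookkeeping once that estimate is in hand.
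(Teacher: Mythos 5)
First, a point of comparison: the paper does not prove \cref{thrm:MET} at all --- it is quoted as a classical result with a citation to Theorem 2.1.2 of Barreira \& Pesin, so there is no in-paper argument to measure your proposal against. Judged on its own terms, your outline follows the standard modern route (Kingman's subadditive ergodic theorem applied to $\log\l|\Lambda^k A^{(n)}\r|$ for each exterior power, extraction of the ordered exponents from the differences $\mu_k-\mu_{k-1}$, a separate backward construction, and Birkhoff/ergodicity for constancy of the spectrum), and that route does prove the theorem. The integrability remark via compactness and $C^1$-smoothness is correct and is exactly why the hypotheses of \cref{thrm:MET} are what they are.

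However, as a proof the proposal has a genuine gap, and you have in effect flagged it yourself: the entire geometric content of the theorem --- the convergence of the singular subspaces of $A^{(n)}(x)$ in the Grassmannian, i.e.\ the existence of the limiting operator $\Lambda_x$ and hence of the filtration whose graded pieces give \cref{eq:oseledecsplitting} and \cref{eq:covariantgrowth} --- is named but not carried out. Kingman gives you only the scalar limits; without the angle estimate showing that exponential separation of consecutive singular values forces the associated subspaces to be Cauchy, you have Lyapunov \emph{exponents} but no Oseledec \emph{spaces}, and LP-regularity cannot even be formulated. A second, smaller omission: you assert that the forward and backward filtrations are transverse on a full-measure set and that they assemble into a single splitting, but this requires showing that the backward spectrum is exactly $\{-\lambda_i\}$ with the same multiplicities $\kappa_i$; the standard device is the determinant identity $\lim_n \tfrac1n\log\lvert\det A^{(n)}\rvert=\sum_i\kappa_i\lambda_i$ (Birkhoff applied to $\log\lvert\det Df\rvert$), applied to both the cocycle and its inverse, which pins the two spectra together and simultaneously delivers the regularity condition built into the definition of LP-regularity. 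Neither step is bookkeeping; together they are the theorem. The closing ergodicity argument (invariant measurable functions are a.e.\ constant) is correct and complete as stated.
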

Loosely, the MET states that, with respect to an ergodic probability measure (that is compatible with the map $f$ and the usual topology), there is probability one of choosing initial conditions for which the Lyapunov exponents are well defined and independent of initial condition.  This form of the MET has a wide range of applications in differentiable dynamical systems, but the MET is not limited to this setting.  The strong version of the MET has been applied in, e.g., hard disk systems, the truncated Fourier expansions of PDEs, and with non-autonomous ODEs and their transfer operators \cite[See example 1.2 for a discussion of these topics]{froyland2013computing}.  For the rest of this work, we will take the hypothesis that our model satisfies LP-regularity.
\begin{hyp}\label{hyp:lpregular}
The model defined by the deterministic equation 
\begin{align}
\bx_k = \bM_k \bx_{k-1},\label{eq:deterministic}
\end{align}
is assumed to be LP-regular.
\end{hyp}
The deterministic evolution in equation \cref{eq:deterministic} comes naturally in the formulation of the Kalman filter, where the mean state is evolved via the deterministic component in the forecast step.  For Gaussian error distributions, the evolution of the forecast error distribution is interpreted in terms of Oseledec's theorem as the evolution of deviations from the mean, propagated via the equations for perturbations.  While Oseledec's theorem guarantees that a decomposition of the model space exists, constructing such a decomposition is non-trivial.  Motivated by equations \cref{eq:forwardgrowth} and \cref{eq:backwardgrowth}, we define the following operators as in equations (13) and (14) of Kuptsov \& Parlitz \cite{Kuptsov2012}.
\begin{mydef}\label{def:faroperators}
We define the \textbf{far-future operator} as
\begin{align}\label{eq:farfuture}
\bW^+(k) \triangleq & \lim_{l\rightarrow \infty}\[\bM^{\T}_{k+l: k} \bM_{ k+l: k}\]^{\frac{1}{2l}},
\end{align}
and the \textbf{far-past operator} as
\begin{align}\label{eq:farpast}
\bW^-(k) \triangleq & \lim_{l\rightarrow \infty}\[\bM^{-\T}_{k: k-l} \bM^{-1}_{ k: k-l}\]^{\frac{1}{2l}}.
\end{align}
\end{mydef}

In the classical proof of the MET, the far-future/past operators are shown to be well defined positive definite, symmetric operators \cite{oseledec1968multiplicative}.  As they are diagonalizable over $\mathbb{R}$, we order the eigenvalues of $\bW^+(k)$ as $\mu^+_1(k)>\cdots >\mu^+_p(k)$ and the eigenvalues of $\bW^-(k)$ as $\mu^-_p(k) > \cdots > \mu^-_1(k)$.  
By the MET, the eigenvalues $\mu^\pm_i(k)$ are independent of $k$ and satisfy the relationship
\begin{align}
\log(\mu^\pm_i) = \pm \lambda_i.\label{eq:eigenvalue_farlimit}
\end{align}

\begin{mydef}\label{def:lyapvects}
Let the columns of the matrix $\bF_k$, respectively $\bB_k$, be any orthonormal eigenbasis for the far-future operator $\bW^+(k)$, respectively far past operator $\bW^-(k)$.  Order the columns block-wise, such that for each $i =1,\cdots, p$ and each $j =1,\cdots, \kappa_i$, $\bF^{i_j}_k$ is an eigenvector for $\mu^+_i$ and $\bB^{i_j}_k$ is an eigenvector for $\mu^-_i$.  We define $\bF^{i_j}_k$ to be the $i_j$-th \textbf{forward Lyapunov vector (FLV)} at time $k$ and $\bB_k^{i_j}$ to be the $i_j$-th \textbf{backward Lyapunov vector (BLV)}. Let the columns of $\bC_k$ form any basis such that for each $i=1,\cdots, p$ and each $j=1,\cdots, \kappa_i$, $\bC^{i_j}_k \in \E^i_k$.  Then we define $\bC^{i_j}_k$ to be the $i$-th \textbf{covariant Lyapunov vector (CLV)} at time $k$.
\end{mydef}

The CLVs are defined only by the Oseledec spaces, and therefore, are independent of the choice of a norm --- any choice of basis subordinate to the Oseledec splitting is valid.  On the other hand, the FLVs and the BLVs are determined specifically with respect to a choice of a norm and the induced metric.  The choice of basis in each case can be made uniquely (up to a scalar and the choice of a norm) only when $p=n$.  For the remaining work we will focus on the BLVs; for a general survey on constructing FLVs, BLVs and CLVs, see e.g., Kuptsov \& Parlitz \cite{Kuptsov2012} and Froyland et. al. \cite{froyland2013computing}.

The Oseledec spaces and Lyapunov vectors can also be defined in terms of filtrations, i.e., chains of ascending or descending subspaces of $\mathbb{R}^n$.  This forms an axiomatic approach to constructing abstract Lyapunov exponents used by, e.g., Barreira \& Pesin \cite{barreira2002}.  The BLVs describe an orthonormal basis for the ascending chain of Oseledec subspaces, the \emph{backward filtration} \cite{Kuptsov2012}.  For all $1 \leq m\leq p$ we obtain the equality
\begin{align}\label{eq:filtrationbasis}\bigoplus_{i=1}^m \E^i_k = \bigoplus_{i=1}^m \sp\left\{\bB_k^{i_j}\right\}_{j=1}^{\kappa_i},
\end{align}
by equation (17) by Kuptsov \& Parlitz \cite{Kuptsov2012}, and the decomposition of the backward filtration in equations (1.5.1) and (1.5.2) of Barreira \& Pesin \cite{barreira2002}.  Note that equation \cref{eq:filtrationbasis} \emph{does not} imply $\bB^{i_j}_k \in \E^i_k$ for $i> 1$, as the BLVs are not themselves covariant with the model dynamics.  However, the BLVs are covariant with the QR algorithm.     
\begin{lemma}
Outside of a set of Lebesgue measure zero, a choice of $i_j$ linearly independent initial conditions for the recursive QR algorithm converges to some choice for the leading $i_j$ BLVs.  For any $k$, the BLVs satisfy the relationship 
\begin{align}\label{eq:qrrecursion}
\bM_k \bB_{k-1} =  \bB_k \bT_k, \hspace{2mm} \Leftrightarrow \hspace{2mm} \bM_k = \bB_k \bT_k \bB_{k-1}^\T
\end{align}
where $\bT_k$ is an upper triangular matrix. Moreover, for any $i_j$ and any $k$,
\begin{align}
\lim_{l \rightarrow -\infty} \frac{1}{k-l} \log\( \l|\bM^{\T}_{k:l} \bB^{ i_j}_{k} \r|\) = \lambda_i. \label{eq:backwardsgrowth}
\end{align}
\end{lemma}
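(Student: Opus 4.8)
The plan is to dispatch the three assertions in turn: the identity \cref{eq:qrrecursion} is purely algebraic, the convergence statement is the substance of the construction, and the growth rate \cref{eq:backwardsgrowth} then follows by combining them with the spectral data already recorded for $\bW^-(k)$. First I would establish \cref{eq:qrrecursion}. Given the orthonormal frame $\bB_{k-1}$, the matrix $\bM_k\bB_{k-1}$ has a $QR$ factorization $\bM_k\bB_{k-1}=\bB_k\bT_k$ with $\bB_k$ orthonormal and $\bT_k$ upper triangular with positive diagonal --- this is exactly the Gram--Schmidt step of the recursive algorithm, and $\bM_k$ is invertible so the diagonal is strictly positive. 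Since $\bB_{k-1}$ is square orthogonal, right-multiplying by $\bB_{k-1}^\T$ gives $\bM_k=\bB_k\bT_k\bB_{k-1}^\T$ and conversely, which is the stated equivalence. Telescoping yields the identity I shall use repeatedly,
\begin{align}
\bM_{k:l}=\bB_k\,\bT_{k:l}\,\bB_l^\T,\qquad \bT_{k:l}\triangleq\bT_k\bT_{k-1}\cdots\bT_{l+1},
\end{align}
with $\bT_{k:l}$ upper triangular and its diagonal equal to the entrywise product of the diagonals of the $\bT_s$.

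For the convergence claim I would run the recursion from a remote past time $l$ with a generic orthonormal frame $\bQ_l$ and follow its leading $m\triangleq i_j$ columns. A standard property of $QR$ is that the span of the leading $m$ columns of the iterate at time $k$ equals $\bM_{k:l}$ applied to the span of the leading $m$ columns of $\bQ_l$. Now by \cref{def:faroperators} and \cref{eq:eigenvalue_farlimit}, the symmetric operator $[\bM_{k:l}\bM_{k:l}^\T]^{1/(2(k-l))}$ converges as $l\to-\infty$ to $\bW^-(k)^{-1}=\bB_k\,\mathrm{diag}(e^{\lambda_{i(m)}})\,\bB_k^\T$ (here $i(m)$ denotes the Oseledec block of the $m$-th column), whose leading $m$-dimensional eigenspace is $\sp\{\bB_k^{1},\dots,\bB_k^{m}\}$. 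Because the exponents are distinct, the top $m$ singular directions of $\bM_{k:l}$ separate from the rest at an exponential rate, so the image of any $m$-plane transversal to the complementary (slower) Oseledec filtration is stretched into alignment with $\sp\{\bB_k^{1},\dots,\bB_k^{m}\}$; the non-transversal frames constitute the exceptional set of Lebesgue measure zero. Hence the leading $m$ iterates converge to an orthonormal basis of the leading backward-filtration subspace \cref{eq:filtrationbasis}, which may be taken to be the leading $m$ BLVs.

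For the growth rate \cref{eq:backwardsgrowth} I would combine the two steps. Using the telescoped factorization and orthonormality of $\bB_k$ and $\bB_l$,
\begin{align}
\l|\bM_{k:l}^\T\bB_k^{i_j}\r|^2=\langle\bB_k^{i_j},\,\bM_{k:l}\bM_{k:l}^\T\bB_k^{i_j}\rangle=\(\bT_{k:l}\bT_{k:l}^\T\)_{i_j,i_j},
\end{align}
i.e. the squared norm of the $i_j$-th row of $\bT_{k:l}$. The diagonal term alone gives the lower bound: $(\bT_{k:l})_{i_j,i_j}=\prod_{s=l+1}^{k}(\bT_s)_{i_j,i_j}$, whose $1/(k-l)$ log-average tends to $\lambda_i$ by LP-regularity, so that $\liminf_{l\to-\infty}\frac{1}{k-l}\log\l|\bM_{k:l}^\T\bB_k^{i_j}\r|\geq\lambda_i$. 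For the matching upper bound I would use that $\bB_k^{i_j}$ is an eigenvector of $\lim_{l\to-\infty}[\bM_{k:l}\bM_{k:l}^\T]^{1/(2(k-l))}=\bW^-(k)^{-1}$ with eigenvalue $e^{\lambda_i}$, forcing the quadratic form to grow no faster than $e^{2\lambda_i(k-l)}$.

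The main obstacle is the same in the second and third steps, and is where the real work lies: passing from the $\tfrac{1}{2(k-l)}$-power limit --- an asymptotic statement about eigenvalues and eigenvectors --- to control of the genuine quadratic form, equivalently showing that the off-diagonal entries of row $i_j$ of $\bT_{k:l}\bT_{k:l}^\T$ grow no faster than the diagonal entry. This requires the left singular directions of $\bM_{k:l}$ to converge to the BLVs at a rate governed by the spectral gaps, so that the residual overlap of the fixed vector $\bB_k^{i_j}$ with the faster-growing directions decays exponentially and cannot contaminate the rate. This exponential separation is exactly the quantitative content of LP-regularity (\cref{hyp:lpregular}); I would obtain it from gap estimates between the Oseledec subspaces, or invoke the constructions of Kuptsov \& Parlitz \cite{Kuptsov2012} and Barreira \& Pesin \cite{barreira2002} underlying \cref{eq:filtrationbasis} and the eigenstructure of $\bW^\pm(k)$.
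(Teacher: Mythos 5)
Your overall route matches the paper's: upper triangularity in the moving frame of BLVs, generic convergence of the recursive QR iterates, and the growth rate \cref{eq:backwardsgrowth} extracted from the adjoint far-past operator \cref{eq:adjointfarpast} via the reciprocal eigenvalue relation. The third step's difficulty that you flag --- passing from convergence of the $\tfrac{1}{2(k-l)}$-power of $\bM_{k:l}\bM_{k:l}^\T$ to the exact exponential rate of the quadratic form at a fixed eigenvector --- is real, but the paper handles it the same way you propose to, by deferring to the classical MET/Oseledec package (Barreira \& Pesin, Kuptsov \& Parlitz, Ershov \& Potapov), so I do not count that against you.

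There is, however, a genuine gap in your first step, and it sits exactly where the dynamical content of \cref{eq:qrrecursion} lives. You call the identity ``purely algebraic'': $\bM_k\bB_{k-1}$ admits a QR factorization $\bB_k\bT_k$. That is true but vacuous --- \emph{any} invertible matrix times an orthogonal matrix has a QR factorization. The content of the lemma is that the Q factor so produced is itself a legitimate choice of BLVs at time $k$ in the sense of \cref{def:lyapvects}, i.e.\ an orthonormal eigenbasis of the far-past operator $\bW^-(k)$, ordered block-wise by eigenvalue. Equivalently: for BLV bases $\bB_{k-1}$ and $\bB_k$ defined \emph{independently} at their respective times by the far-past operators, the matrix $\bB_k^\T\bM_k\bB_{k-1}$ is upper triangular. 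This does not follow from the existence of a QR factorization; it follows from the covariance of the backward filtration, namely that for every $m$,
\begin{align}
\bM_k\(\bigoplus_{i=1}^m\sp\left\{\bB_{k-1}^{i_j}\right\}_{j=1}^{\kappa_i}\)=\bigoplus_{i=1}^m\sp\left\{\bB_{k}^{i_j}\right\}_{j=1}^{\kappa_i},
\end{align}
which the paper obtains by combining the covariance of the Oseledec spaces in \cref{eq:oseledecsplitting} with the identification of the backward filtration in \cref{eq:filtrationbasis}. Your second paragraph contains the raw material to repair this (the alignment of the leading singular directions of $\bM_{k:l}$ with the leading BLVs as $l\to-\infty$ forces the filtration to be carried forward by $\bM_k$), but as written the first paragraph assumes the conclusion: it defines $\bB_k$ as the output of the recursion rather than proving that the recursion's output coincides with the norm-defined BLVs. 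Make the filtration-covariance argument explicit and the first claim closes; the remainder of your outline is sound modulo the cited classical results.
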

\begin{proof}
The covariance of the BLVs with respect to the QR algorithm in equation \cref{eq:qrrecursion} can be derived from equations \cref{eq:oseledecsplitting} and \cref{eq:filtrationbasis}.  For all $1\leq m \leq p$,
\begin{align}
\bM_k \(\bigoplus_{i=1}^m \sp\left\{\bB_{k-1}^{i_j}\right\}_{j=1}^{\kappa_i}\) = \bigoplus_{i=1}^m \sp\left\{\bB_{k}^{i_j}\right\}_{j=1}^{\kappa_i},
\end{align}
due to the covariance of the Oseledec spaces.  Therefore the transformation $\bM_k$ represented in a moving frame of BLVs is upper triangular.  When the spectrum is degenerate, $p<n$, there is non-uniqueness in the choice of the BLVs.  However, given an initial choice of the BLVs at some time $k-1$, the choice of BLVs at time $k$ can be defined directly via the relationship in \cref{eq:qrrecursion}.  This is the relationship derived in equation (31) by Kuptsov \& Parlitz \cite{Kuptsov2012}, and is the basis of the recursive QR algorithms of Shimada \& Nagashima \cite{shimada1979} and Benettin et. al. \cite{benettin1980}.  A choice of BLVs gives a special choice of the classical Perron transformation \cite{adrianova1995introduction}[see Theorems 3.3.1 \& 3.3.2], and in particular, it is proven by Ershov \& Potapov \cite{ershov1998} that outside of a set of Lebesgue measure zero, the recursive QR algorithm converges to some choice of BLVs.

Note that the far-future/past operators are  also well defined for the propagator of the adjoint model $\bz_k = \bM^{-\T}_k \bz_{k-1}.$
Equation \cref{eq:backwardsgrowth} thus follows from the far-past operator for the adjoint model, defined
\begin{align}\label{eq:adjointfarpast}
\bW^{\ast -}(k) \triangleq \lim_{l \rightarrow \infty} \[ \bM_{k:k-l} \bM_{k:k-l}^\T\]^{\frac{1}{2l}}.
\end{align}
It is easy to verify that the BLVs defined by the adjoint model agree with those defined via the regular model --- in each case, the left singular vectors of $\bM_{k:k-l}$ converge to a choice of the BLVs as $l\rightarrow \infty$.  Notice that the eigenvalues of $\bW^{\ast -}(k)$ are reciprocal to those of $\bW^-(k)$, i.e.,
$\mu^{\ast -}_i = \frac{1}{\mu^-_i}$.  Thus by equation \cref{eq:eigenvalue_farlimit}, $\log\(\mu^{\ast -}_i \) =  \lambda_i$.
\end{proof}

Equation \cref{eq:qrrecursion} describes the dynamics in the moving frame of BLVs, where the transition map from the frame at time $t_{k-1}$ to time $t_k$ is given by $\bT_k$.  Applying the change of basis sequentially for the matrix $\bM_{k:l}$, we recover
\begin{equation}
 \bM_k =  \bB_k \bT_k \bB_{k-1}^\T 
\hspace{2mm} \Rightarrow  \hspace{2mm}\bM_{k:l} =  \bB_k \bT_{k:l} \bB_{l}^\T, \label{eq:matrixdecomp}
\end{equation}
where we define $\bT_{k:l} \triangleq \bT_k \cdots \bT_{l}$.  We note that $\bM_{k:k}=\bI_n$ implies $\bT_{k:k} \equiv \bI_n$.  Let $\be_{i_j}$ denote the $i_j$-th standard basis vector, such that
\begin{align}
\l|\bM^{\T}_{k:l} \bB^{i_j}_k \r|^2  
= \be_{i_j}^\T \bT_{k:l} \(\bT_{k:l}\)^\T  \be_{i_j} 
=\l|\(\bT^\T_{k:l}\)^{i_j} \r|^2 
\end{align}
where $\(\bT^\T_{k:l}\)^{i_j}$ denotes the ${i_j}$-th column of $\bT^\T_{k:k-l}$, i.e., the ${i_j}$-th row of $\bT_{k:k-l}$.  For any $k$ and any $\epsilon>0$, there exists some $N_{\epsilon, k}$ such that if $k-l$ is taken sufficiently large, equation \cref{eq:backwardsgrowth} guarantees
\begin{align}
e^{2(\lambda_i - \epsilon)l} \leq \l|\(\bT^\T_{k:k-l}\)^{i_j} \r|^2 \leq e^{2(\lambda_i + \epsilon)l}.\label{eq:rownorm}
\end{align}

\begin{mydef}\label{def:lle}
For each $k>l$, $i=1,\cdots, p$ and $j=1, \cdots, \kappa_i$ we define the ${i_j}$-th \textbf{local Lyapunov exponent (LLE)} from $k$ to $l$ as $\frac{1}{k-l}\log \(\rvert T^{i_j}_{k:l} \rvert\)$ where $T^{i_j}_{k:l}$ is defined to be the ${i_j}$-th diagonal entry of $\bT_{k:l}$.
\end{mydef}
\begin{lemma}\label{lemma:computationalLE}
For any fixed $l$,
\begin{align}
\lim_{k\rightarrow \infty}\frac{1}{k-l}\log \(\rvert T^{i_j}_{k:l} \rvert\)&= \lambda_i
\end{align}
\end{lemma}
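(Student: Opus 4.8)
The plan is to read each diagonal entry of $\bT_{k:l}$ directly off the QR factorization \cref{eq:matrixdecomp}, bound it above by the Oseledec rate $\lambda_i$, and then use the determinant of the full propagator to force these bounds to be attained. First I would note that, since $\bB_k$ and $\bB_l$ are orthogonal, \cref{eq:matrixdecomp} rearranges to $\bT_{k:l}=\bB_k^\T\bM_{k:l}\bB_l$, so the $i_j$-th diagonal entry is the bilinear form $T^{i_j}_{k:l}=(\bB_k^{i_j})^\T\bM_{k:l}\bB_l^{i_j}$.

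Next I would expand the fixed vector $\bB_l^{i_j}$ in the Oseledec splitting \cref{eq:oseledecsplitting}, writing $\bB_l^{i_j}=\sum_{a=1}^{i}\bc_a$ with $\bc_a\in\E_l^a$; only indices $a\le i$ occur because $\bB_l^{i_j}$ lies in the backward filtration $\bigoplus_{a\le i}\E_l^a$ by \cref{eq:filtrationbasis}. Covariance sends each component into $\E_k^a$, and since $\bB_k^{i_j}$ is orthogonal to $\bigoplus_{a<i}\E_k^a=\spann\{\bB_k^{a_b}:a<i\}$ (again \cref{eq:filtrationbasis}), every term with $a<i$ drops out, leaving $T^{i_j}_{k:l}=(\bB_k^{i_j})^\T\bM_{k:l}\bc_i$ with $\bc_i\in\E_l^i$ (and $\bc_i\ne\0$, else $\bB_l^{i_j}$ would lie in the lower filtration, contradicting orthonormality of the BLVs). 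Cauchy--Schwarz gives $|T^{i_j}_{k:l}|\le\|\bM_{k:l}\bc_i\|$, so the covariant growth rate \cref{eq:covariantgrowth} applied to $\bc_i\in\E_l^i$ yields $\limsup_{k\to\infty}\tfrac{1}{k-l}\log|T^{i_j}_{k:l}|\le\lambda_i$.

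To upgrade these one-sided estimates to the stated limits I would impose the global constraint from the determinant. Because $\bT_{k:l}$ is upper triangular with orthogonal $\bB$ factors, $\prod_{i,j}T^{i_j}_{k:l}=\det\bT_{k:l}=\pm\det\bM_{k:l}$, and LP-regularity (\cref{hyp:lpregular}) identifies $\tfrac{1}{k-l}\log|\det\bM_{k:l}|\to\sum_{i=1}^p\kappa_i\lambda_i$. Summing the per-entry upper bounds gives the same total $\sum_i\kappa_i\lambda_i$, so by subadditivity of the limit superior the liminf of any single diagonal LLE is at least the total minus the sum of the other upper bounds, i.e. $\ge\sum_i\kappa_i\lambda_i-(\sum_i\kappa_i\lambda_i-\lambda_i)=\lambda_i$; combined with the upper bound this pins every diagonal LLE to $\lambda_i$.

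The hard part is precisely the lower bound on an individual diagonal entry. A direct estimate would require controlling the angle between $\bB_k^{i_j}$ and $\bM_{k:l}\bc_i$, which could in principle collapse exponentially, and it would have to disentangle entries within a degenerate block $\kappa_i>1$ where growth can be shuffled among the $\kappa_i$ columns. The determinant step is what I expect to carry the real weight: it supplies the exact aggregate growth and thereby makes the upper bounds tight without any angle argument. Accordingly, the two facts I would verify most carefully are the orthogonality $\bB_k^{i_j}\perp\bigoplus_{a<i}\E_k^a$ coming from the filtration identity \cref{eq:filtrationbasis}, and the identification of the determinant growth rate with $\sum_i\kappa_i\lambda_i$ under LP-regularity.
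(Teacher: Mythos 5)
Your proposal is correct, but it does not follow the paper's route for the simple reason that the paper offers no proof of its own: \cref{lemma:computationalLE} is disposed of by citing Ershov \& Potapov and Dieci \& Van Vleck for the convergence of the recursive QR algorithm. What you have written is the classical self-contained argument (essentially the one used for regular triangularizations in Barreira \& Pesin), and it holds up. The reduction $T^{i_j}_{k:l}=(\bB^{i_j}_k)^\T\bM_{k:l}\bB^{i_j}_l=(\bB^{i_j}_k)^\T\bM_{k:l}\bc_i$ with $\0\neq\bc_i\in\E^i_l$ is valid: \cref{eq:filtrationbasis} with $m=i-1$ together with orthonormality of the BLVs gives $\bB^{i_j}_k\perp\bigoplus_{a<i}\E^a_k$, covariance of the Oseledec spaces kills the components with $a<i$, and Cauchy--Schwarz with \cref{eq:covariantgrowth} yields $\limsup_k\tfrac{1}{k-l}\log\lvert T^{i_j}_{k:l}\rvert\le\lambda_i$ even inside a degenerate block. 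The determinant step is also sound: invertibility of $\bM_{k:l}$ guarantees no diagonal entry vanishes, $\prod_{i,j}\lvert T^{i_j}_{k:l}\rvert=\lvert\det\bM_{k:l}\rvert$, and the elementary estimate $\liminf_k a_{i_0j_0}\ge\lim_k\sum_{i,j} a_{ij}-\sum_{(i,j)\ne(i_0,j_0)}\limsup_k a_{ij}$ pins each diagonal rate to $\lambda_i$, which is exactly how one circumvents any angle estimate between $\bB^{i_j}_k$ and $\bM_{k:l}\bc_i$. The one point you must make explicit is the fact you flag last: the identity $\lim_k\tfrac{1}{k-l}\log\lvert\det\bM_{k:l}\rvert=\sum_i\kappa_i\lambda_i$ does \emph{not} follow from the splitting and \cref{eq:covariantgrowth} alone as reproduced in the paper's abbreviated Oseledec theorem (the angles between Oseledec spaces could a priori degenerate exponentially); it is, however, one of the standard equivalent formulations of LP-regularity, so it is legitimately supplied by \cref{hyp:lpregular} via the full statement in Barreira \& Pesin. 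With that ingredient cited, your argument buys a self-contained derivation where the paper relies entirely on external references.
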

\begin{proof}
This is also discussed by Ershov \& Potapov \cite{ershov1998}, in demonstrating the convergence of the recursive QR algorithm.  For a discussion on the numerical stability and convergence see, e.g., Dieci \& Van Vleck \cite{dieci2002lyapunov, dieci2007lyapunov}.
\end{proof}

Perturbations of model error to the mean equation for the Kalman filter are not governed by the asymptotic rates of growth or decay, but rather, the LLEs.
While the LLE $\frac{1}{k-l}\log \(\rvert T^{i_j}_{k:l} \rvert\)$ approaches the value $\lambda_i$ as $k-l$ approaches infinity, 
its behavior on short time scales can be highly variable.  Particularly, for an arbitrary LP-regular system, the rate of convergence in equation \cref{eq:backwardsgrowth} may depend on $k$.  An important class of such systems is, e.g., non-uniformly hyperbolic systems \cite{barreira2002}[see chapter 2].  To make the LLEs tractable, we make an additional assumption, compatible with the typical assumptions for \emph{partial hyperbolicity} \cite{hasselblatt2006}.  We adapt the definition of partial hyperbolicity from Hasselblatt \& Pesin \cite{Hasselblatt2011} to our setting.

\begin{mydef}\label{def:partiallyhyperbolic}
Let $\lambda_{n_0}= 0$.  For every $k$ we define the splitting into unstable, neutral and stable subspaces: 
\begin{align}\label{eq:unneustab}
E^u_k \triangleq \bigoplus_{i= 1}^{n_0 - 1}\E^i_k, & & E^c_k \triangleq \E^{n_0}_k & &and & & E^s_k \triangleq \bigoplus_{i= n_0 + 1}^{p}\E^i_k.
\end{align}
Suppose there exists constants $C>0$ and 
\begin{align}
0 < \eta_s \leq \nu_s < \eta_c \leq \nu_c <  \eta_u \leq \nu_u
\end{align}
independent of $k$ such that $\nu_s <1 < \eta_u$ and for any $l>0$, $\bv \in E^m_k$, $\l| \bv \r|=1$, and $m\in\{s,c,u\}$
\begin{align}
\frac{(\eta_m)^l}{C} \leq \l| \bM_{k+l:k} \bv \r| \leq (\nu_m)^l C .\label{eq:parthypbound}
\end{align}
Then the model \cref{eq:dynmodel} is \textbf{(uniformly) partially hyperbolic (in the narrow sense)}.
\end{mydef}

Partially hyperbolic systems, as in \cref{def:partiallyhyperbolic}, have LLEs which are bounded uniformly with respect to rates defined on the subspaces in equation \cref{eq:unneustab}.  When $C$ is taken large the definition permits transient growth of stable modes and transient decay of unstable modes.  The neutral subspace encapsulates diverse behaviors which always fall below prescribed rates of exponential growth or decay.  We will make a slightly stronger assumption on these uniform growth and decay rates that is equivalent to fixing a uniform window of transient variability \emph{on each} Lyapunov exponent.
\begin{hyp}\label{hyp:uniformconvergence}
Let $\epsilon>0$ be given.  We assume that for each $i$ there exists some $N_{i,\epsilon}$, independent of $k$ and $j$, such that for any $\bB^{i_j}_k$ whenever $k-l> N_{i,\epsilon}$
\begin{align}\label{eq:epsilonwindow}
-\epsilon < \frac{1}{k-l} \log \(\l| \bM^{\T}_{k : l} \bB^{i_j}_k \r| \)  - \lambda_i < \epsilon ,
\end{align}
i.e., the growth and decay is uniform (translation invariant) in $k$.
\end{hyp}
Unless specifically stated otherwise, we assume \cref{hyp:uniformconvergence} for the remaining of this paper.  However, our results may be generalized to all systems satisfying \cref{def:partiallyhyperbolic} by using only the uniform rates of growth or decay on the \emph{entire} unstable, neutral and stable subspaces in equation \cref{eq:parthypbound}.  Our results also apply to systems without neutral exponents, i.e. $\lambda_{n_0}>0$, as a trivial extension.

\section{Dynamically induced bounds for the Riccati equation}
\label{section:bounds}
\subsection{Autonomous systems}\label{sec:aut}
Consider the classical theorem regarding the existence and uniqueness of solutions to the stable Riccati equation for autonomous dynamics.  This is paraphrased from Theorem 2.38, Chapter 7, of Kumar \& Varaiya \cite{kumar86} in terms of the forecast error covariance recurrence in equation \cref{eq:riccatitv}. 
\begin{mydef}
The autonomous system is defined such that for every $k$
\begin{equation}
\begin{matrix}
\bM_k \equiv \bM ,  &
\bH_k \equiv \bH ,  &
\bQ_k \equiv \bQ ,  &
\bR_k \equiv \bR  &
and &
\bOmega_k \equiv \bOmega.
\end{matrix}\label{eq:autdef}
\end{equation}
Let $\bP = \bX \bX^\T$ for some $\bX\in \mathbb{R}^{n\times n}$, the \textbf{stable Riccati equation} is defined as
      \begin{align}
      \bP &= \bM \bX(\bI_n + \bX^\T \bOmega \bX )^{-1} \bX^\T \bM^\T +\bQ \label{eq:stableRiccati}
      \end{align}
\end{mydef}
\begin{thrm}
Let the autonomous system defined by equations \cref{eq:dynmodel}, \cref{eq:obsmodel} and \cref{eq:autdef} satisfy \cref{hyp:obscont}.  There is a positive semi-definite matrix, $ \widehat{\bP}\equiv \widehat{\bX}\widehat{\bX}^\T$, which is the unique solution to the stable Riccati equation \cref{eq:stableRiccati}.  For any initial choice of $\bP_0$, if $\bP_k$ satisfies the recursion in equation \cref{eq:riccatitv}, then $\lim_{k\rightarrow \infty} \bP_k = \widehat{\bP}$.
\end{thrm}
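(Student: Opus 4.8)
The plan is to treat the three assertions — existence, uniqueness, and global convergence — by exploiting the monotonicity of the Riccati operator together with the filter-stability result already recorded in \cref{thrm:boundedstable}. Write the forecast recurrence \cref{eq:riccatitv} as a single operator $\Phi(\bP) \triangleq \bM(\bI_n + \bP\bOmega)^{-1}\bP\bM^\T + \bQ$, so that the stable Riccati equation \cref{eq:stableRiccati} is exactly the fixed-point condition $\widehat{\bP} = \Phi(\widehat{\bP})$. The structural property I would establish first is that $\Phi$ is monotone non-decreasing in the Löwner order: if $\bzero \leq \bP \leq \bP'$ then $\Phi(\bP) \leq \Phi(\bP')$. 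This follows because the analysis map $\bP \mapsto (\bI_n + \bP\bOmega)^{-1}\bP = (\bP^{-1} + \bOmega)^{-1}$ is order-preserving on positive definite matrices (inversion reverses the order twice, with $\bOmega$ added in between) and extends to the semidefinite boundary by continuity, while conjugation by $\bM$ and the additive constant $\bQ$ both preserve the order.

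For existence, I would initialize the recurrence at $\bP_0 = \bzero$. Then $\bP_1 = \Phi(\bzero) = \bQ \geq \bzero = \bP_0$, and monotonicity of $\Phi$ propagates this inductively to give a non-decreasing sequence $\bP_0 \leq \bP_1 \leq \bP_2 \leq \cdots$. The upper bound in \cref{eq:nonautrecursion}, specialized to the autonomous case, shows that under \cref{hyp:obscont} the sequence is uniformly bounded above by a fixed multiple of $\bI_n$. A monotone, order-bounded sequence of symmetric positive semidefinite matrices converges entrywise to a limit $\widehat{\bP}$, and continuity of $\Phi$ forces $\widehat{\bP} = \Phi(\widehat{\bP})$, so $\widehat{\bP}$ is a positive semidefinite solution of \cref{eq:stableRiccati}; since any such matrix factors as $\widehat{\bP} = \widehat{\bX}\widehat{\bX}^\T$, this yields the asserted form. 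The lower bound in \cref{eq:nonautrecursion} together with uniform complete controllability further gives $\widehat{\bP} \geq c\bI_n > 0$, so the fixed point is in fact positive definite.

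Global convergence and uniqueness then both reduce to \cref{thrm:boundedstable}. Given an arbitrary initialization $\bP_0 > 0$, I would compare the resulting sequence $\bP_k$ with the constant sequence generated by initializing at $\widehat{\bP}$, which is stationary since $\Phi(\widehat{\bP}) = \widehat{\bP}$. As both initializations are positive definite, \cref{thrm:boundedstable} yields $\l|\bP^a_k - \widehat{\bP}^a\r| \to 0$ exponentially; applying the forecast map $\bP \mapsto \bM\bP\bM^\T + \bQ$ transfers this to the forecast covariances, giving $\lim_{k\to\infty}\bP_k = \widehat{\bP}$. Uniqueness is then immediate: if $\bP^\ast$ were any other positive semidefinite solution, the controllability lower bound again forces $\bP^\ast > 0$, and the constant trajectory it generates must, by the same convergence result, tend to $\widehat{\bP}$; being constant it equals its own limit, so $\bP^\ast = \widehat{\bP}$.

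The main obstacle I anticipate is the boundedness step — the uniform upper bound that makes the monotone sequence converge — since this is precisely where \cref{hyp:obscont} enters nontrivially and where the control-theoretic content of the classical Kumar \& Varaiya theorem resides. The cleanest self-contained route is to invoke the upper bound of \cref{eq:nonautrecursion}, but verifying that the weighted controllability term $\bXi^\alpha_{k:1}$ stays uniformly bounded above requires the observability estimate that trades dynamical expansion against observational constraint; alternatively, one can bypass the explicit bound entirely by appealing to the dual LQ-control interpretation, in which existence, uniqueness, and convergence of the stabilizing solution follow from stabilizability and detectability via dynamic programming, exactly as in the cited Theorem 2.38 of Kumar \& Varaiya.
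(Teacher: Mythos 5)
The paper does not actually prove this theorem: it is stated as a classical result, paraphrased from Theorem 2.38, Chapter 7, of Kumar \& Varaiya \cite{kumar86}, and the citation is the entirety of the paper's justification. Your proposal therefore necessarily takes a different route, namely the standard monotone-iteration argument: $\Phi$ is order-preserving on the positive semidefinite cone, the sequence $\Phi^k(\bzero)$ is non-decreasing and (once bounded) converges to a fixed point, positivity follows from uniform complete controllability, and global convergence and uniqueness are delegated to \cref{thrm:boundedstable}. This is a legitimate and essentially self-contained proof, and it has the merit of reusing machinery the paper has already stated rather than importing the LQ dynamic-programming apparatus wholesale.

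One step does need repair, and it is the one you flag yourself. Your primary route to the uniform upper bound --- the right-hand side of \cref{eq:nonautrecursion} with the term $\bXi^\alpha_{k:1}$ --- does not work in general: the paper explicitly notes that $\alpha=0$ unless observations are full dimensional, and $\bXi^0_{k:1}=\bUpsilon_{k:1}$ grows without bound whenever $\bM$ has an eigenvalue outside the unit disk, so uniform complete controllability bounds only fixed-length windows $\bUpsilon_{k:k-N_\Upsilon}$, not the full sum. The clean fix is already in your own argument: by the monotonicity you established, $\Phi^k(\bzero)\leq\Phi^k(\bI_n)$, and the right-hand sequence is uniformly bounded by \cref{thrm:boundedstable} (its analysis covariances satisfy $\bP^a_k\leq p^a_{\sup}\bI_n$, hence the forecast covariances satisfy $\bP_k\leq \l|\bM\r|^2 p^a_{\sup}\bI_n+\bQ$). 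This closes the existence step without invoking the dual LQ interpretation. Two smaller points: the theorem claims convergence for \emph{any} initial $\bP_0$, while \cref{thrm:boundedstable} assumes $\bP_0>0$; for merely semidefinite $\bP_0$ you should add the sandwich $\Phi^k(\bzero)\leq\Phi^k(\bP_0)\leq\Phi^k(\bP_0+\bI_n)$, both ends of which converge to $\widehat{\bP}$. And when transferring the exponential convergence of the analysis covariances to the forecast covariances, note that the forecast map $\bP\mapsto\bM\bP\bM^\T+\bQ$ is Lipschitz with constant $\l|\bM\r|^2$, so the limit statement survives; this is worth a sentence since $\bM$ may be expanding.
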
   

Slightly abusing notation, take $\alpha$ and $\beta$ to be defined by the solution to the stable Riccati equation \cref{eq:stableRiccati},
\begin{align}\label{eq:stablealphabeta}
&\alpha\triangleq \sigma_n\(\widehat{\bX}^\T \bOmega \widehat{\bX}\) \geq 0 & 
&\beta\triangleq \sigma_1\(\widehat{\bX}^\T \bOmega \widehat{\bX}\) < \infty.
\end{align}
Then for any $k$ we recover the invariant recursion for the stable limit
\begin{align}
(1+ \beta)^{-k}\bM^{k} \widehat{\bP}\(\bM^\T\)^{k} + \bXi^\beta_{k:1} \leq \widehat{\bP} \leq (1+ \alpha)^{-k}\bM^{k} \widehat{\bP}\( \bM^\T \)^{k} + \bXi^\alpha_{k:1}. \label{eq:stablerecursion}
\end{align}

\begin{prop}\label{prop:autupper}
Assume equations \cref{eq:dynmodel} and \cref{eq:obsmodel} satisfy \cref{hyp:obscont} and define $\alpha,\beta$ for the stable Riccati equation as in equation \cref{eq:stablealphabeta}.  For any $1\leq i\leq p$, if there exists $\epsilon > 0$ such that 
\begin{align}\label{eq:autepsilonupper}
\frac{e^{2(\lambda_i + \epsilon)}}{1+ \alpha} < 1,
\end{align}
choose $N_{i,\epsilon}$ as in \cref{hyp:uniformconvergence}. For the eigenvalue $\mu_i$ of $\bM^\T$, where $\rvert \mu_i \rvert = e^{\lambda_i}$, choose any eigenvector $\bv_{i_j}$.  Then  
\begin{align}
\bv_{i_j}^\T\widehat{\bP}\bv_{i_j} &\leq \frac{\bv_{i_j}^\T \bQ \bv_{i_j} }{1-\frac{e^{2\lambda_i} }{1+\alpha}}. \label{eq:eigbndupper}
\end{align}
  Moreover, if $\bB^{i_j}$ is the $i_j$-th BLV, then
\begin{align}
\(\bB^{i_j}\)^\T\widehat{\bP}\bB^{ i_j} &\leq  \(\bB^{ i_j}\)^\T\bXi^\alpha_{N_{i,\epsilon}:0}\bB^{ i_j} +  \(\frac{e^{2(\lambda_i +\epsilon)}}{1+ \alpha}\)^{N_{i,\epsilon} +1} \(\frac{q_{\sup}}{1-\frac{e^{2(\lambda_i+\epsilon)}}{1+\alpha}}\) \label{eq:coarseupperinv}.
\end{align}
 
For every $1\leq i\leq p$, any $\epsilon>0$, and associated $N_{i,\epsilon}$ as in \cref{hyp:uniformconvergence},
\begin{align}\label{eq:eigbndlower}
\frac{\bv_{i_j}^\T \bQ \bv_{i_j} }{1-\frac{e^{2\lambda_i} }{1+\beta}} \leq \bv_{i_j}^\T\widehat{\bP}\bv_{i_j}
\end{align}
and 
\begin{align}
\(\bB^{ i_j}\)^\T\bXi^\beta_{N_{i,\epsilon}:0}\bB^{ i_j} +  \(\frac{e^{2(\lambda_i - \epsilon)}}{1+ \beta}\)^{N_{i,\epsilon} +1} \(\frac{q_{\inf}}{1-\frac{e^{2(\lambda_i-\epsilon)}}{1+\beta}}\) \leq \(\bB^{ i_j}\)^\T\widehat{\bP}\bB^{ i_j} .\label{eq:coarselowerinv}
\end{align}
\end{prop}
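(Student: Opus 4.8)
The plan is to read the sandwiching inequality \cref{eq:stablerecursion} as a scalar identity by testing it against two families of vectors: the eigenvectors $\bv_{i_j}$ of $\bM^\T$, which grow or decay \emph{exactly} at rate $e^{\lambda_i}$, and the BLVs $\bB^{i_j}$, which grow or decay only \emph{approximately} at that rate through \cref{hyp:uniformconvergence}. In both cases the essential simplification is that, in the autonomous setting, the weighted controllability matrix is $\bXi^\alpha_{k:1}=\sum_{m=0}^{k-1}(1+\alpha)^{-m}\bM^m\bQ(\bM^\T)^m$, so its quadratic form against a well-behaved vector reduces to a scalar geometric series in the ratio $e^{2\lambda_i}/(1+\alpha)$ (respectively $e^{2\lambda_i}/(1+\beta)$).

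First I would prove the eigenvector bounds. Substituting $(\bM^\T)^k\bv_{i_j}=\mu_i^k\bv_{i_j}$ (reading the form as Hermitian, or restricting to real $\mu_i$, so that $|\mu_i|^{2k}=e^{2\lambda_i k}$ appears cleanly), the controllability quadratic form collapses to $\bv_{i_j}^\T\bQ\bv_{i_j}\sum_{m=0}^{k-1}\(e^{2\lambda_i}/(1+\alpha)\)^m$. Applying the upper half of \cref{eq:stablerecursion} and letting $k\to\infty$ gives \cref{eq:eigbndupper}: the remainder $(1+\alpha)^{-k}e^{2\lambda_i k}\bv_{i_j}^\T\widehat{\bP}\bv_{i_j}\to 0$ precisely because \cref{eq:autepsilonupper} forces $e^{2\lambda_i}/(1+\alpha)<1$, and the geometric sum converges to $1/\(1-e^{2\lambda_i}/(1+\alpha)\)$. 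The lower bound \cref{eq:eigbndlower} is the mirror image from the lower half of \cref{eq:stablerecursion}: the $\bM^k$ remainder is positive semi-definite and is simply discarded, leaving $\bv_{i_j}^\T\bXi^\beta_{k:1}\bv_{i_j}\le\bv_{i_j}^\T\widehat{\bP}\bv_{i_j}$. Here I would split into the case $e^{2\lambda_i}/(1+\beta)<1$, where the series converges to the stated value, and the case $e^{2\lambda_i}/(1+\beta)\ge 1$, where the claimed fraction is non-positive and the inequality holds trivially.

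Next, for the BLV bounds the same sandwiching is used, but the exact eigen-relation is replaced by the two-sided estimate $e^{2(\lambda_i-\epsilon)m}\le\l|(\bM^\T)^m\bB^{i_j}\r|^2\le e^{2(\lambda_i+\epsilon)m}$, valid for $m>N_{i,\epsilon}$ by \cref{hyp:uniformconvergence} and \cref{eq:rownorm}. The key manoeuvre is to split the autonomous sum at $m=N_{i,\epsilon}$: after the $m=k-l$ reindexing, the head $\sum_{m=0}^{N_{i,\epsilon}}$ is \emph{exactly} $\(\bB^{i_j}\)^\T\bXi^\alpha_{N_{i,\epsilon}:0}\bB^{i_j}$, while the tail is bounded using $\bQ\le q_{\sup}\bI_n$ and the upper growth estimate by $q_{\sup}\sum_{m>N_{i,\epsilon}}\(e^{2(\lambda_i+\epsilon)}/(1+\alpha)\)^m$, whose closed form is precisely the second term of \cref{eq:coarseupperinv}. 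For the remainder $(1+\alpha)^{-k}\(\bB^{i_j}\)^\T\bM^k\widehat{\bP}(\bM^\T)^k\bB^{i_j}$ I would bound $\widehat{\bP}\le\sigma_1(\widehat{\bP})\bI_n$ and use the growth estimate to obtain $\sigma_1(\widehat{\bP})\(e^{2(\lambda_i+\epsilon)}/(1+\alpha)\)^k\to 0$, so this term vanishes in the limit. The lower bound \cref{eq:coarselowerinv} is symmetric, using $\bQ\ge q_{\inf}\bI_n$, the lower growth estimate, and the positive semi-definite $\bM^k$ remainder discarded; again I case-split on whether $e^{2(\lambda_i-\epsilon)}/(1+\beta)$ is below or at least one, observing in the latter case that $\(\bB^{i_j}\)^\T\bXi^\beta_{N_{i,\epsilon}:0}\bB^{i_j}\le\(\bB^{i_j}\)^\T\widehat{\bP}\bB^{i_j}$ already follows from \cref{eq:stablerecursion} evaluated at $k=N_{i,\epsilon}+1$, while the added fraction is non-positive.

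The main obstacle is the BLV tail estimate: unlike the eigenvectors, the BLVs do not diagonalize $\bM$, so the controllability sum does not literally reduce to a scalar and one must control $\l|(\bM^\T)^m\bB^{i_j}\r|^2$ term-by-term through \cref{hyp:uniformconvergence}, which only applies past the threshold $N_{i,\epsilon}$ --- this is exactly what forces the head/tail split and the careful matching of the truncated head with $\bXi^\bullet_{N_{i,\epsilon}:0}$ and of the tail's leading power with the exponent $N_{i,\epsilon}+1$. The secondary subtlety is the regime $e^{2(\lambda_i\pm\epsilon)}/(1+\bullet)\ge 1$ in the lower bounds, where the geometric series diverges yet the stated fraction becomes non-positive; there the inequality must be read as the weaker but still valid finitely-truncated statement rather than as a convergent limit.
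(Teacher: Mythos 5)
Your proposal is correct and follows essentially the same route as the paper's proof: test the sandwich \cref{eq:stablerecursion} against eigenvectors of $\bM^\T$ to collapse the weighted controllability sum into a scalar geometric series, and against the BLVs with the head/tail split at $N_{i,\epsilon}$ using \cref{hyp:uniformconvergence}, discarding or vanishing the propagated $\widehat{\bP}$ remainder as appropriate. Your explicit case split for the lower bounds when $e^{2\lambda_i}/(1+\beta)\geq 1$ is a small point of extra care that the paper leaves implicit (it is instead exploited in \cref{cor:lowerbndspread}).
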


\begin{proof}
Note that time invariant propagators trivially satisfy \cref{hyp:uniformconvergence} and it is easy to verify the relationship $\rvert \mu_i \rvert = e^{\lambda_i}$ directly from the definition of the Lyapunov exponents.  We begin by proving equations \cref{eq:eigbndupper} and \cref{eq:eigbndlower} for eigenvectors of $\bM^\T$.  If $\bv_{i_j}$ is an eigenvector of $\bM^\T$ associated to $\mu_i$, equation \cref{eq:stablerecursion} implies
\begin{align}
 \bv_{i_j}^\T\widehat{\bP}\bv_{i_j} \leq \(\frac{\rvert\mu_i\rvert^2 }{1+ \alpha}\)^{k+1}\bv_{i_j}^\T\widehat{\bP} \bv_{i_j} + \sum_{l=0}^{k} \(\frac{\rvert \mu_i \rvert^2 }{1+\alpha}\)^l \bv_{i_j}^\T\bQ \bv_{i_j}
\end{align}
for every $k$.  For $\lambda_i < 0$ generally, or for any $\lambda_i$ such that $\alpha > e^{2\lambda_i} - 1$,
\begin{align}
 &\lim_{k\rightarrow\infty}\[\(\frac{\rvert\mu_i\rvert^2 }{1+ \alpha}\)^{k+1}\bv_{i_j}^\T \widehat{\bP}\bv_{i_j} +\sum_{l=0}^{k} \(\frac{\rvert \mu_i \rvert^2 }{1+\alpha}\)^l \bv_{i_j}^\T \bQ \bv_{i_j}\]  = \frac{\bv_{i_j}^\T \bQ \bv_{i_j}}{1-\frac{\rvert \mu_i \rvert^2 }{1+\alpha}} 
\end{align} 
 and
\begin{align} 
 \bv_{i_j}^\T\widehat{\bP}\bv_{i_j}\leq \frac{\bv_{i_j}^\T \bQ \bv_{i_j}}{1-\frac{e^{2\lambda_i} }{1+\alpha}} .
\end{align}

The stable Riccati equation \cref{eq:stableRiccati} implies $\bQ \leq \widehat{\bP}$.  Therefore, using the left side of \cref{eq:stablerecursion} demonstrates that for \emph{any} eigenvector $\bv_{i_j}$
\begin{align}
 \sum^{k}_{l=0}   \(\frac{ \mid \mu_i \mid^{2}}{1+ \beta}\)^l \bv_{i_j}^\T \bQ \bv_{i_j} \leq \bv_{i_j}^\T\widehat{\bP}\bv_{i_j}  \label{eq:eigrecursionlower}
\end{align}
for all $k$.  In particular, for every eigenvector $\bv_{i_j}$ we obtain
\begin{align}\label{eq:eiglower}
\frac{\bv_{i_j}^\T \bQ \bv_{i_j}}{1 - \frac{\mid\mu_i\mid^2}{1 + \beta}} & \leq \bv_{i_j}^\T\widehat{\bP} \bv_{i_j} .
\end{align}
  
The above argument does not have a straightforward extension to the generalized eigenspaces so we coarsen the bound to obtain a closed limiting form in terms of the BLVs which retain the important growth characteristics under $\bM^\T$.  For $i> n_0$, or for any $\lambda_i$ such that $\alpha > e^{2\lambda_i} - 1$, there is a choice of $\epsilon$ as in equation \cref{eq:autepsilonupper} and $N_{i,\epsilon}$ as in \cref{hyp:uniformconvergence}.  Let $\widehat{\bP} \leq \widehat{p}_{\sup}\bI_n$, then from the right side of equation \cref{eq:stablerecursion} we derive
\begin{align}
\widehat{\bP}
 &\leq  \frac{ \widehat{p}_{\sup} \bM^{k+1} \(\bM^\T\)^{k+1}}{(1+ \alpha)^{k+1}}  + \sum_{l=0}^{k} \frac{\bM^{l} \bQ \(\bM^\T\)^{l}}{(1+\alpha)^l} \\
&\leq  \frac{ \widehat{p}_{\sup} \bM^{k+1} \(\bM^\T\)^{k+1}}{(1+ \alpha)^{k+1}}  + \bXi^\alpha_{N_{\epsilon,i}:1} + q_{\sup} \sum_{l=N_{\epsilon,i}+1}^{k} \frac{\bM^{l}  \(\bM^\T\)^{l}}{(1+\alpha)^l}, 
 \label{eq:reducedupperaut}
\end{align}
which implies $\(\bB^{ i_j}\)^\T\widehat{\bP}\bB^{ i_j}$ can be bounded above by
\begin{align}
 \widehat{p}_{\sup}\frac{ \l| \(\bM^\T\)^{k+1} \bB^{ i_j} \r|^2}{(1+ \alpha)^{k+1}} + \(\bB^{ i_j}\)^\T\bXi^\alpha_{N_{i,\epsilon}:1}\bB^{ i_j} + q_{\sup} \sum_{l=N_{i,\epsilon} + 1}^{k} \frac{ \l| \(\bM^\T\)^l \bB^{ i_j} \r|^2}{(1+\alpha)^{l}}.
\end{align}
Utilizing equation \cref{eq:backwardsgrowth} we bound $\(\bB^{ i_j}\)^\T\widehat{\bP}\bB^{ i_j}$ by
\begin{align}
\widehat{p}_{\sup}\(\frac{e^{2(\lambda_i +\epsilon)}}{1+ \alpha}\)^{k+1}  + \(\bB^{ i_j}\)^\T\bXi^\alpha_{N_{i,\epsilon}:1} \bB^{ i_j} + q_{\sup} \sum_{l=N_{i,\epsilon} +1}^{k} \(\frac{e^{2(\lambda_i+\epsilon)}}{1+\alpha}\)^{l} \label{eq:psiautbnd}
\end{align}
for every $k > N_{i,\epsilon}$.  Taking the limit of equation \cref{eq:psiautbnd} as $k\rightarrow \infty$ yields
\begin{align}
\(\bB^{ i_j}\)^\T\widehat{\bP}\bB^{ i_j} &\leq  \(\bB^{i_j}\)^\T\bXi^\alpha_{N_{i,\epsilon}:1}\bB^{ i_j} +   \(\frac{e^{2(\lambda_i +\epsilon)}}{1+ \alpha}\)^{N_{i,\epsilon} +1} \(\frac{q_{\sup}}{1-\frac{e^{2(\lambda_i+\epsilon)}}{1+\alpha}}\),
\end{align}
The lower bound is demonstrated by similar arguments with the lower bound in equation \cref{eq:stablerecursion}, utilizing the property $\widehat{\bP} < \infty$.
\end{proof}

\cref{prop:autupper} is similar results in perfect models \cite{bocquet2015expanding, gurumoorthy2017, bocquet2017degenerate}, but with some key differences.  Once again that the estimation errors are dissipated by the dynamics in the span of the stable BLVs, but the recurrent injection of model error prevents the total collapse of the covariance to the unstable-neutral subspace.  In equation \cref{eq:eigbndupper}, we see that for very strong decay, when $e^{2\lambda_i} \approx 0$, or high precision observations, i.e., when the system is fully observed and as $\alpha \rightarrow \infty$, the stable limit of the forecast uncertainty reduces to what is introduced by the recurrent injection of model error.  The SEEK filter of Pham et. al. \cite{pham1998} has exploited these properties by neglecting corrections in the stable eigenspaces and only making corrections in the unstable directions.  This is likewise the motivation for AUS of Trevisan et. al. \cite{carrassi2008b, trevisan2010, trevisan2011kalman, palatella2013a, palatella2015}, though the work of AUS was concerned with nonlinear, perfect models.  

The upper bounds in equations \cref{eq:eigbndupper} and \cref{eq:coarseupperinv} generally hold for $i\leq n_0$ only when the system is fully observed.  Therefore, these bounds can be considered an ideal bound for the unstable-neutral modes.  However, the lower bound in equation \cref{eq:eiglower} hold generally for $i<n_0$.  By assuming the existence of an invariant solution to the stable Riccati equation \cref{eq:stableRiccati}, we will recover a necessary condition for its existence.

\begin{cor}\label{cor:lowerbndspread}
\emph{Assume there exists a solution $\widehat{\bP}$ to the stable Riccati equation \cref{eq:stableRiccati}}.  Choose the smallest index $i$ such that $1\leq i \leq n_0$ and there exists some generalized eigenvector $\bv_{i_j}$ of $\bM^\T$ for which $\bv_{i_j} \notin {\rm \emph{null}}\(\bQ\)$.
\emph{Then it is necessary that}
\begin{align}\label{eq:autnecessary}
\frac{e^{2\lambda_i}}{1+\sigma_1^2\(\bR^{-\frac{1}{2}}\bH \widehat{\bX}\)} <1.
\end{align}
\end{cor}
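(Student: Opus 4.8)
The plan is to deduce the inequality by contradiction from the lower bound already contained in \cref{prop:autupper}, so the first task is to recognize \cref{eq:autnecessary} as a restatement of $\frac{e^{2\lambda_i}}{1+\beta}<1$ with $\beta$ as in \cref{eq:stablealphabeta}. Since $\bOmega = \bH^\T\bR^{-1}\bH$, one has $\widehat{\bX}^\T\bOmega\widehat{\bX} = \(\bR^{-\frac{1}{2}}\bH\widehat{\bX}\)^\T\(\bR^{-\frac{1}{2}}\bH\widehat{\bX}\)$, and because the eigenvalues of this Gram matrix are the squared singular values of its factor and $\sigma_1$ of a symmetric positive semi-definite matrix is its top eigenvalue, $\beta = \sigma_1\(\widehat{\bX}^\T\bOmega\widehat{\bX}\) = \sigma_1^2\(\bR^{-\frac{1}{2}}\bH\widehat{\bX}\)$. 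Hence it suffices to prove $\frac{e^{2\lambda_i}}{1+\beta}<1$, recalling $|\mu_i| = e^{\lambda_i}$ for the eigenvalue $\mu_i$ attached to the index $i$.

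I would then assume for contradiction that $\frac{e^{2\lambda_i}}{1+\beta}\geq 1$. Dropping the non-negative term on the left of \cref{eq:stablerecursion} and pairing with $\bv_{i_j}$ gives, for every $k$,
\begin{align}
\sum_{m=0}^{k-1}(1+\beta)^{-m}\l|\bQ^{\frac{1}{2}}\(\bM^\T\)^m\bv_{i_j}\r|^2 = \bv_{i_j}^\T\bXi^\beta_{k:1}\bv_{i_j}\leq \bv_{i_j}^\T\widehat{\bP}\bv_{i_j},
\end{align}
whose right-hand side is a fixed finite number since $\widehat{\bP}$ is assumed to exist as a positive semi-definite matrix. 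The goal is to show the left-hand series diverges under the standing assumption. For a genuine eigenvector this is immediate: $\(\bM^\T\)^m\bv_{i_j}=\mu_i^m\bv_{i_j}$ makes the summand equal to $\(\frac{e^{2\lambda_i}}{1+\beta}\)^m\bv_{i_j}^\T\bQ\bv_{i_j}$, with $\bv_{i_j}^\T\bQ\bv_{i_j}>0$ because $\bQ\geq\0$ and $\bv_{i_j}\notin{\rm null}(\bQ)$; the summand does not vanish and the series diverges, contradicting finiteness.

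The genuinely \emph{generalized} case, where $\bv_{i_j}$ sits atop a nontrivial Jordan chain $\bv_1,\dots,\bv_r=\bv_{i_j}$, is the only real obstacle. Here $\(\bM^\T\)^m\bv_{i_j}=\sum_{t=0}^{r-1}\binom{m}{t}\mu_i^{m-t}\bv_{r-t}$, a polynomial in $m$ against $\mu_i^m$; the point is that applying $\bQ^{\frac{1}{2}}$ cannot annihilate the growth, because among the terms with $\bv_{r-t}\notin{\rm null}(\bQ)$ the one of highest degree in $m$ is unique and hence not cancelled, and such a term exists since the $t=0$ term is $\mu_i^m\bv_r$ with $\bv_r=\bv_{i_j}\notin{\rm null}(\bQ)$. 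Consequently $\l|\bQ^{\frac{1}{2}}\(\bM^\T\)^m\bv_{i_j}\r|^2$ grows like $m^{2t^*}e^{2\lambda_i m}$ for some integer $t^*\geq 0$, so the summand is asymptotic to $m^{2t^*}\(\frac{e^{2\lambda_i}}{1+\beta}\)^m$, which again fails to tend to zero once $\frac{e^{2\lambda_i}}{1+\beta}\geq 1$, forcing the same contradiction. This establishes $\frac{e^{2\lambda_i}}{1+\beta}<1$; choosing $i$ as the \emph{smallest} admissible index (the largest excited exponent) makes \cref{eq:autnecessary} the binding form of this necessary condition, since it implies the weaker inequalities for every larger index.
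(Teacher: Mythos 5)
Your proposal is correct and takes essentially the same route as the paper: both evaluate the lower bound $\bXi^\beta_{k:1}\leq\widehat{\bP}$ from \cref{eq:stablerecursion} on a generalized eigenvector outside ${\rm null}(\bQ)$, identify $\beta=\sigma_1^2\(\bR^{-\frac{1}{2}}\bH\widehat{\bX}\)$, and force convergence of the resulting weighted series. The only difference is the Jordan-block step: the paper picks the \emph{smallest} chain index $j$ with $\bQ\bv_{i_j}\neq\0$ so that the binomial expansion of $\(\bM^\T\)^l\bv_{i_j}$ collapses exactly to $\mu_i^l$ times a fixed vector, yielding a clean geometric series, whereas you keep an arbitrary non-annihilated chain vector and extract the surviving leading-order term $m^{t^*}$; both devices are valid.
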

\begin{proof}
Let $\bv_{i_1}$ be an eigenvector for $\bM^\T$ and $\bQ\bv_{i_1} \neq 0$.  Then by the definition of $\beta$ in equation \cref{eq:stablealphabeta}, the equation \cref{eq:eigrecursionlower} holds for all $k$ if and only if equation \cref{eq:autnecessary} holds.  More generally, suppose $\{\bv_{i_j}\}_{j=1}^{\kappa_i}$ are (possibly complex) generalized eigenvectors forming a Jordan block for $\bM^\T$.  Let $j$ be the smallest index for which $\bQ \bv_{i_j} \neq 0$.  Recall that for each $j\in\{1,\cdots,\kappa_i\}$ the Jordan basis satisfies
\begin{align}
\(\bM^\T -\mu_i \bI_n \)\bv_{i_j} = \bv_{i_{j-1}}
\end{align}
where $\bv_{i_0} \equiv 0$.  Therefore, for any $m\geq 1$, the vector $\(\bM^\T -\mu_i \bI_n \)^m \bv_{i_j}$ is in the span of $\{\bv_{i_1}, \cdots, \bv_{i_{j-1}}\}$.   Let us define $\bN \triangleq \bM^\T -\mu_i \bI_n$ so that
\begin{align}
\begin{split}
\sum_{l=0}^{k+1} \bQ \(\bM^\T\)^l \bv_{i_j} &= \sum_{l=0}^{k+1} \bQ\(\bN + \mu_i \bI_n\)^l  \bv_{i_j} \\
& = \sum_{l=0}^{k+1} \sum_{m=0}^l \mu_i^{l-m} {l \choose m} \bQ   \bN^m    \bv_{i_j} \\  
& = \sum_{l=0}^{k+1} \mu_i^l \bQ    \bv_{i_j}.
\end{split}\label{eq:jordanerror}
\end{align}
Multiply equation \cref{eq:stablerecursion} on the left with $\bv_{i_j}^\H$ (the conjugate transpose) and the right with $\bv_{i_j}$.  Combining this with the equality in equation \cref{eq:jordanerror}, proves the result. 
\end{proof}

\cref{cor:lowerbndspread} shows that it is necessary for the existence of the stable Riccati equation that observations are precise enough, relative to the background uncertainty, to counteract the strongest dynamic instability forcing the model error.  The quantity in \cref{eq:autnecessary} thus represents the stabilizing effect of the observations, similar to the bounds on the conditioning number provided by Haben et al. \cite{haben2011conditioning} and Tabeart et al. \cite{tabeart2018conditioning}, but in \cref{cor:lowerbndspread} expressly in response to the system's dynamic instabilities. 

\subsection{Time varying systems}
\label{section:tvbnd}
In the following, we will extend the results of \cref{prop:autupper} and \cref{cor:lowerbndspread} to time-varying systems, and derive a uniform bound on the unfiltered errors in the stable BLVs in \cref{cor:stablebnd}.

\begin{prop}\label{prop:tvbnd}
Assume equations \cref{eq:dynmodel} and \cref{eq:obsmodel} satisfy \cref{hyp:obscont} (b).  For any $1 \leq i \leq p$, if there exists $\epsilon > 0$ such that 
\begin{align}\label{eq:tvepsilonupper}
\frac{e^{2(\lambda_i + \epsilon)}}{1+ \alpha} < 1,
\end{align}
choose $N_{i,\epsilon}$ as in \cref{hyp:uniformconvergence}.  Then there exists a constant $0\leq C_{\alpha,N_{i,\epsilon}}$ such that
\begin{align}
\limsup_{k\rightarrow \infty} \(\bB^{ i_j}_k\)^\T\bP_k\bB^{ i_j}_k &\leq  C_{\alpha, N_{i\epsilon}} +  \(\frac{e^{2(\lambda_i +\epsilon)}}{1+ \alpha}\)^{N_{i,\epsilon} +1} \(\frac{q_{\sup}}{1-\frac{e^{2(\lambda_i+\epsilon)}}{1+\alpha}}\).\label{eq:tvupper}
\end{align}
 
If \cref{hyp:obscont} (a) is also satisfied, then for every $1 \leq i \leq p$, any $\epsilon>0$ and associated $N_{i,\epsilon}$, there exists $0\leq C_{\beta,N_{i,\epsilon}}$ such that
\begin{align}
  C_{\beta, N_{i\epsilon}}   +  \(\frac{e^{2(\lambda_i - \epsilon)}}{1+ \beta}\)^{N_{i,\epsilon} +1} \(\frac{q_{\inf}}{1-\frac{e^{2(\lambda_i-\epsilon)}}{1+\beta}}\) \leq \liminf_{k\rightarrow \infty} \(\bB^{ i_j}_k\)^\T\bP_k\bB^{ i_j}_k .\label{eq:tvlower}
\end{align}
\end{prop}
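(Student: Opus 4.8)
The plan is to replay the autonomous argument of \cref{prop:autupper} through the recursive bound \cref{eq:nonautrecursion}, the difference being that every object now drifts with $k$, so I must supply uniform-in-$k$ control. I would start from the upper half of \cref{eq:nonautrecursion} and form the quadratic form against the time-$k$ backward Lyapunov vector $\bB^{i_j}_k$, obtaining
$(\bB^{i_j}_k)^\T\bP_k\bB^{i_j}_k \le (1+\alpha)^{-k}(\bB^{i_j}_k)^\T\bM_{k:0}\bP_0\bM_{k:0}^\T\bB^{i_j}_k + (\bB^{i_j}_k)^\T\bXi^\alpha_{k:1}\bB^{i_j}_k$. The first term carries the initial covariance; using $\bP_0 \le p_{\sup}\bI_n$ and the upper growth estimate $\|\bM_{k:0}^\T\bB^{i_j}_k\|^2 \le e^{2(\lambda_i+\epsilon)k}$ from \cref{hyp:uniformconvergence} (valid once $k>N_{i,\epsilon}$), it is $O\big((e^{2(\lambda_i+\epsilon)}/(1+\alpha))^k\big)$, which by \cref{eq:tvepsilonupper} tends to zero and so drops out of the $\limsup$.

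The substance is the weighted controllability term. Writing $(\bB^{i_j}_k)^\T\bXi^\alpha_{k:1}\bB^{i_j}_k = \sum_{l=1}^{k}(1+\alpha)^{-(k-l)}(\bM_{k:l}^\T\bB^{i_j}_k)^\T\bQ_l(\bM_{k:l}^\T\bB^{i_j}_k)$ and bounding $\bQ_l \le q_{\sup}\bI_n$, I would split the sum at $k-l = N_{i,\epsilon}$. In the \emph{far} block ($k-l > N_{i,\epsilon}$) the estimate $\|\bM_{k:l}^\T\bB^{i_j}_k\|^2 \le e^{2(\lambda_i+\epsilon)(k-l)}$ applies, so after re-indexing by $m=k-l$ the block is dominated by the geometric tail $q_{\sup}\sum_{m=N_{i,\epsilon}+1}^{\infty}(e^{2(\lambda_i+\epsilon)}/(1+\alpha))^m$, which sums to exactly the second term of \cref{eq:tvupper}. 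The \emph{near} block is $(\bB^{i_j}_k)^\T\bXi^\alpha_{k:k-N_{i,\epsilon}}\bB^{i_j}_k$; since the weights do not exceed one it is controlled by $(\bB^{i_j}_k)^\T\bUpsilon_{k:k-N_{i,\epsilon}}\bB^{i_j}_k$, and uniform complete controllability (\cref{hyp:obscont} (b)), together with the standing uniform hypotheses on the model, furnishes a uniform upper bound on the fixed-length controllability Gramian, yielding a $k$-independent constant $C_{\alpha,N_{i,\epsilon}}$. Passing to $\limsup_{k\to\infty}$ of the three pieces gives \cref{eq:tvupper}.

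For the lower bound \cref{eq:tvlower} I would run the symmetric computation on the left inequality of \cref{eq:nonautrecursion}, now using $\bQ_l \ge q_{\inf}\bI_n$ and $\|\bM_{k:l}^\T\bB^{i_j}_k\|^2 \ge e^{2(\lambda_i-\epsilon)(k-l)}$: the far block produces the geometric term of \cref{eq:tvlower} and the near block a nonnegative constant $C_{\beta,N_{i,\epsilon}}$. The extra assumption \cref{hyp:obscont} (a) enters only to make $\beta = \sup_k \sigma_1(\bX_k^\T\bOmega_k\bX_k)$ finite: by \cref{thrm:boundedstable} full observability and controllability bound $\bP_k$, hence $\bX_k$, uniformly above, so $\beta<\infty$ and the weighted matrix $\bXi^\beta_{k:1}$ appearing on the left of \cref{eq:nonautrecursion} is well defined with strictly positive ratio. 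I note that this lower bound is informative only in the stable regime $e^{2(\lambda_i-\epsilon)}/(1+\beta)<1$; otherwise the geometric denominator is non-positive and the inequality holds trivially by nonnegativity of the quadratic form.

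The main obstacle is the uniform $k$-independence of the near-block constants $C_{\alpha,N_{i,\epsilon}}$ and $C_{\beta,N_{i,\epsilon}}$. In the autonomous case the corresponding quantity $(\bB^{i_j})^\T\bXi^\alpha_{N_{i,\epsilon}:0}\bB^{i_j}$ is a single fixed number, whereas here $(\bB^{i_j}_k)^\T\bXi^\alpha_{k:k-N_{i,\epsilon}}\bB^{i_j}_k$ drifts with $k$ through both the moving frame $\bB^{i_j}_k$ and the propagators $\bM_{k:l}$ over the last $N_{i,\epsilon}$ steps. The delicate point is reconciling the split window $N_{i,\epsilon}$ with the controllability window $N_\Upsilon$: if $N_{i,\epsilon}\le N_\Upsilon$ one uses monotonicity of the Gramian in the window length to conclude $\bUpsilon_{k:k-N_{i,\epsilon}}\le \bUpsilon_{k:k-N_\Upsilon}\le b\bI_n$, while for $N_{i,\epsilon}>N_\Upsilon$ one must invoke that uniform complete controllability bounds every fixed-length Gramian uniformly, under the standing assumption that the single-step propagators are uniformly bounded. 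Making this step genuinely uniform, so that the constants do not secretly depend on $k$, is where the argument requires the most care.
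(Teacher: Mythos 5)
Your proposal reproduces the paper's argument essentially verbatim: the same quadratic form of $\bB^{i_j}_k$ against the two halves of \cref{eq:nonautrecursion}, the same split of the weighted controllability sum at lag $N_{i,\epsilon}$ with the geometric tail supplied by \cref{hyp:uniformconvergence} and the near block absorbed into a $k$-independent constant via \cref{hyp:obscont} (b), and the same appeal to \cref{thrm:boundedstable} to secure $\beta<\infty$ for the lower bound. The only divergence is that you dwell on reconciling $N_{i,\epsilon}$ with $N_\Upsilon$, a point the paper passes over by directly asserting the uniform bound $\bUpsilon_{k:k-N_{i,\epsilon}}\leq b_{N_{i,\epsilon}}\bI_n$; your explicit treatment of that step is a reasonable (and slightly more careful) filling-in rather than a different route.
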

\begin{proof}
If the system satisfies \cref{hyp:obscont} (b) then
\begin{align}
\bXi^\alpha_{k:k-N_{i,\epsilon}} &\leq \bXi^0_{k:k-N_{i,\epsilon}} \equiv \bUpsilon_{k:k-N_{i,\epsilon}} \leq b_{N_{i,\epsilon}} \bI_n,
\end{align}
where $b_{N_{i,\epsilon}}$ is independent of $k$.  Therefore, there exists a constant depending on $\alpha$ and $N_{i,\epsilon}$, but independent of $k$, such that
\begin{align}
\bXi^\alpha_{k:k-N_{i,\epsilon}} \leq C_{\alpha,N_{i,\epsilon}} \bI_n.
\end{align} 
Let $\bP_0 \leq p_0 \bI_n $ bound the prior covariance.  Equation \cref{eq:nonautrecursion} implies
\begin{align}
 & \bP_k \leq p_0 \frac{\bM_{k:0} \bM^\T_{k:0} }{(1+\alpha)^k} +  q_{\sup}\sum_{l=1}^{k} \frac{\bM_{k:l} \bM^\T_{k:l}}{(1+\alpha)^{k-l}}.
 \end{align}
From the above, we bound $\(\bB^{ i_j}_k\)^\T\bP_k \bB^{i_j}_k$ with
 \begin{align}
 &p_0 \frac{\l| \bM^\T_{k:0} \bB^{ i_j}_k\r|^2}{(1+\alpha)^k} +  \(\bB^{ i_j}_k\)^\T\bXi^\alpha_{k:k-N_{i,\epsilon}}\bB^{ i_j}_k + q_{\sup}  \sum_{l=0}^{k-N_{i,\epsilon}-1} \frac{\l| \bM^\T_{k:l} \bB^{i_j}_k\r|^2}{(1+\alpha)^{k-l}},
 \end{align}
 thus
 \begin{align}
&\(\bB^{ i_j}_k\)^\T\bP_k \bB^{i_j}_k \leq p_0 \(\frac{e^{2(\lambda_i + \epsilon)}}{1+\alpha}\)^k +  C_{\alpha,N_{i,\epsilon}} + q_{\sup}  \sum_{l=N_{i,\epsilon}+1}^{k} \(\frac{e^{2(\lambda_i + \epsilon)}}{1+\alpha}\)^l \label{eq:backwardstvfinite}.
\end{align}
Taking the $\limsup$ in equation \cref{eq:backwardstvfinite} as $k \rightarrow \infty$ yields equation \cref{eq:tvupper}.  

Suppose that \cref{hyp:obscont} (a) and (b) are both satisfied, then by \cref{thrm:boundedstable} there exists a uniform bound on $\bP_k$ such that $\bX_k$ must also be uniformly bounded; together with uniform boundedness of $\bR_k$ and $\bH_k$, this implies
$\beta < \infty$.  Note that
\begin{align}
\bXi^\beta_{k:k-N_{i,\epsilon}} &\geq \(\frac{1}{1 + \beta}\)^{N_{i,\epsilon}} \bXi^0_{k:k-N_{i,\epsilon}} \geq \(\frac{1}{1 + \beta}\)^{N_{i,\epsilon}} a_{N_{i,\epsilon}} \bI_n  
\end{align}
for some constant $a_{N_{i,\epsilon}}$ independent of $k$.  This implies
\begin{align}
\bXi^\beta_{k:k-N_{i,\epsilon}} \geq C_{\beta,N_{i,\epsilon}} \bI_n \label{equation:controlbeta}
\end{align}
for a constant $C_{\beta,N_{i,\epsilon}}$ depending on $\beta$ and $N_{i,\epsilon}$ but independent of $k$.  Utilizing the recursion in equation \cref{eq:nonautrecursion}, choosing $\epsilon$ and an appropriate $N_{i,\epsilon}$, and finally bounding the weighted controllability matrix with equation \cref{equation:controlbeta} allows one to recover the lower bound in equation \cref{eq:tvlower} in a similar manner to the upper bound.
\end{proof} 

The above proposition shows that there is a uniform upper and lower bound on the forecast error for the Kalman filter, in the presence of model error, which can be described in terms of inverse, competing factors: the constant $\alpha$ (respectively $\beta$) is interpreted as the minimal (respectively maximal) observational precision relative to the maximal (respectively minimal) background forecast uncertainty, represented in the observation variables.
  Additionally $C_{\beta, N_{i\epsilon}}, C_{\alpha, N_{i\epsilon}}$ represent the lower and upper bounds on \emph{local variability} of the evolution of model errors, before perturbations adhere within an $\epsilon$ threshold to their asymptotic behavior.
 
\begin{cor}\label{cor:tvnecessary}
Assume equations \cref{eq:dynmodel} and \cref{eq:obsmodel} satisfy \cref{hyp:obscont} (b), and there exists uniform bound to the forecast error Riccati equation \cref{eq:riccatitv} for all $k$.  \emph{Then it is necessary that}
\begin{align}\label{eq:tvnecessary}
\frac{e^{2\lambda_1}}{1+\sup_k\sigma_1^2\(\bR_k^{-\frac{1}{2}}\bH_k \bX_k\)} <1.
\end{align}
\end{cor}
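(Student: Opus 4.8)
The plan is to recognize \cref{eq:tvnecessary} as the time-varying analogue of the autonomous necessary condition \cref{cor:lowerbndspread}, now applied to the \emph{leading} Lyapunov exponent $\lambda_1$. First I would record the identity $\sigma_1^2\(\bR_k^{-\frac{1}{2}}\bH_k\bX_k\) = \sigma_1\(\bX_k^\T\bOmega_k\bX_k\)$, which holds because $\bX_k^\T\bOmega_k\bX_k = \(\bR_k^{-\frac{1}{2}}\bH_k\bX_k\)^\T\(\bR_k^{-\frac{1}{2}}\bH_k\bX_k\)$, so the eigenvalues of the symmetric matrix on the left are the squared singular values of the factor on the right. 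Consequently $\sup_k\sigma_1^2\(\bR_k^{-\frac{1}{2}}\bH_k\bX_k\) = \beta$, and the claimed inequality \cref{eq:tvnecessary} is exactly $\tfrac{e^{2\lambda_1}}{1+\beta}<1$. I would also note at the outset that a uniform bound on $\bP_k$ forces $\bX_k$ to be uniformly bounded, and together with the uniform boundedness of $\bR_k$ and $\bH_k$ this already gives $\beta<\infty$ (as argued in the proof of \cref{prop:tvbnd}); the real content of the corollary is therefore the \emph{strict} inequality, not mere finiteness.

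I would then argue the contrapositive: assuming $\tfrac{e^{2\lambda_1}}{1+\beta}\geq 1$, I would show $\bP_k$ cannot be uniformly bounded. Taking the left inequality of \cref{eq:nonautrecursion}, discarding the positive semi-definite prior term, and projecting $\bXi^\beta_{k:1}$ onto a leading backward Lyapunov vector $\bB^{1_j}_k$, the pointwise bound $\bQ_l\geq q_{\inf}\bI_n$ together with the backward growth \cref{eq:backwardsgrowth} (made uniform in $k$ by \cref{hyp:uniformconvergence}) yields, for every $\epsilon>0$,
\begin{align}
\(\bB^{1_j}_k\)^\T\bP_k\bB^{1_j}_k \geq q_{\inf}\sum_{m=N_{1,\epsilon}+1}^{k-1}\(\frac{e^{2(\lambda_1-\epsilon)}}{1+\beta}\)^{m}.
\end{align}
When the strict inequality $\tfrac{e^{2\lambda_1}}{1+\beta}>1$ holds I would fix $\epsilon$ small enough that $\tfrac{e^{2(\lambda_1-\epsilon)}}{1+\beta}>1$; the partial sums then diverge as $k\rightarrow\infty$, so $\(\bB^{1_j}_k\)^\T\bP_k\bB^{1_j}_k\rightarrow\infty$, contradicting uniform boundedness. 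This reproduces, along the single most unstable backward direction, the mechanism by which the recurrent injection of model error is amplified faster than the observations can dissipate it.

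The hard part will be the boundary case $\tfrac{e^{2\lambda_1}}{1+\beta}=1$, where the $\epsilon$-reduction above produces ratio $e^{-2\epsilon}<1$ and a convergent geometric series, so the naive estimate no longer forces divergence. Here I would replace the $\epsilon$-exponential lower bound on $\l|\bM^\T_{k:k-m}\bB^{1_j}_k\r|$ by an exact, constant-factor bound at the top exponent: writing $\bM_{k:l}=\bB_k\bT_{k:l}\bB_l^\T$ as in \cref{eq:matrixdecomp}, the summand $\(1+\beta\)^{-m}\l|\bM^\T_{k:k-m}\bB^{1_j}_k\r|^2$ is bounded below by $\(1+\beta\)^{-m}|T^{1_j}_{k:k-m}|^2$ through the diagonal triangular factor, and the convergence $\tfrac{1}{m}\log|T^{1_j}_{k:k-m}|\to\lambda_1$ from \cref{lemma:computationalLE} would need to be upgraded to a uniform bound $|T^{1_j}_{k:k-m}|\geq c\,e^{\lambda_1 m}$; each model-error injection along the leading mode then contributes a non-vanishing amount and the sum diverges, exactly as the exact eigenvector growth in \cref{cor:lowerbndspread} makes the boundary series equal $(k+1)\,\bv_{i_j}^\T\bQ\bv_{i_j}\rightarrow\infty$. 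I would additionally guard against the degenerate possibility $q_{\inf}=0$ by replacing the pointwise bound $\bQ_l\geq q_{\inf}\bI_n$ with the controllability estimate $\bUpsilon_{k:k-N}\geq a\bI_n$ from \cref{hyp:obscont} (b), ensuring a uniformly non-zero injection of uncertainty into the leading direction over each window of length $N$. I expect this boundary bookkeeping, securing the \emph{strict} inequality rather than $\leq 1$, to be the delicate step of the argument.
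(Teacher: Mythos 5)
Your reduction of \cref{eq:tvnecessary} to $e^{2\lambda_1}/(1+\beta)<1$ is correct, and your core mechanism --- project the lower bound of \cref{eq:nonautrecursion} onto a leading BLV and let the backward growth \cref{eq:backwardsgrowth} overwhelm the damping factor $(1+\beta)^{-1}$ --- is the same one the paper uses. The differences are in the bookkeeping, and one of them matters. Your primary route lower-bounds each summand of $\bXi^\beta_{k:1}$ pointwise by $q_{\inf}$ and sums a divergent geometric series; but the standing assumptions only require $\0\leq q_{\inf}\bI_n\leq\bQ_k$, so $q_{\inf}$ may vanish and that route can collapse. The controllability estimate you relegate to a ``guard against the degenerate possibility'' is in fact the paper's main argument: it truncates $\bXi^\beta_{k:1}$ to the fixed window $l=1,\dots,N_\Upsilon+1$, recognizes that window as $\bM_{k:N_\Upsilon+1}\bUpsilon_{N_\Upsilon+1:1}\bM^\T_{k:N_\Upsilon+1}$, invokes \cref{hyp:obscont}~(b) to replace $\bUpsilon_{N_\Upsilon+1:1}$ by a positive multiple of $\bI_n$, and then needs only the unboundedness of the single term $(1+\beta)^{-(k-N_\Upsilon-1)}\l|\bM^\T_{k:N_\Upsilon+1}\bB^{1_j}_k\r|^2$ rather than divergence of a series. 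Promote that route to the main line; what it buys is independence from $q_{\inf}$ and a one-term limit in place of series estimates.

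Your concern about the boundary case $e^{2\lambda_1}/(1+\beta)=1$ is legitimate but cuts both ways: the paper's own limiting argument, exactly like your $\epsilon$-reduction, only delivers $e^{2\lambda_1}\leq 1+\beta$, and the uniform constant-factor bound $\rvert T^{1_j}_{k:k-m}\rvert\geq c\,e^{\lambda_1 m}$ you propose to close the gap is strictly stronger than what \cref{hyp:uniformconvergence} supplies (an $\epsilon$-window on exponential rates, not a sharp constant). So you should not expect to secure the strict inequality from the stated hypotheses; to match the paper, the $\epsilon$-window argument applied to the propagated controllability window is all that is required.
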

\begin{proof}
If the forecast error Riccati equation \cref{eq:riccatitv} is uniformly bounded, there is a $ 0<p_{\sup}<\infty$ such that we have the inequality, $\bP_{k} \leq p_{\sup} \bI_n$ for all $k$, and $\beta< \infty$.  Using the lower bound in equation \cref{eq:nonautrecursion}, for all $k$ we have
\begin{align}\label{eq:psupbound}
\( \frac{1}{1 + \beta} \)^k \bM_{k:0} \bP_0 \bM^\T_{k:0} + \bXi^\beta_{k:1}& \leq p_{\sup} \bI_n.
\end{align}
The summands in equation \cref{eq:psupbound} are positive semi-definite such that for any $k> N_\Upsilon+1$, truncating $\bXi_{k:1}^\beta$ verifies
\begin{align}
\sum_{l=1}^{N_\Upsilon + 1} \(\frac{1}{1 + \beta}\)^{k-l}\bM_{k:l}\bQ_l \bM_{k:l}^\T \leq \bXi_{k:1}^\beta \leq p_{\sup}\bI_n.
\end{align}
Note that by \cref{def:obscontmat}, if $k>N_{\Upsilon}+2$ 
\begin{align}
\bM_{k:N_\Upsilon + 1} \bUpsilon_{N_\Upsilon +1 :1} \bM_{k:N_\Upsilon + 1}^\T= \sum_{l=1}^{N_\Upsilon +1}\bM_{k:l}\bQ_l \bM_{k:l}^\T,
\end{align}
and therefore, for every $k>N_{\Upsilon}+2$
\begin{align}
  \(\frac{1}{1+\beta}\)^{k -1 } \bM_{k:N_\Upsilon + 1} \bUpsilon_{N_\Upsilon +1 :1} \bM_{k:N_\Upsilon + 1}^\T&\leq p_{\sup}\bI_n.
\end{align}
Using \cref{hyp:obscont} (b), for every $k>N_{\Upsilon}+2$ we derive
\begin{align}
 \(\frac{1}{1+\beta}\)^{k-N_\Upsilon - 1} \bM_{k:N_\Upsilon +1}\bM_{k:N_\Upsilon +1}^\T & \leq \frac{p_{\sup}( 1+\beta)^{N_\Upsilon}}{b}\bI_n,\label{eq:finalsupbound}
\end{align}
using the inequality in \cref{eq:controlbound}.  For any $j$, multiplying equation \cref{eq:finalsupbound} on the left by $\(\bB^{1_j}_k\)^\T$ and on the right by $\bB^{1_j}_k$ and taking the limit as $k\rightarrow \infty$ shows that it is necessary for equation \cref{eq:tvnecessary} to hold for the left side to be bounded away from $\infty$.
\end{proof}

In contrast to \cref{cor:lowerbndspread} for autonomous systems, \cref{cor:tvnecessary} uses the \cref{hyp:obscont} (b) to simplify the arguments --- this moreover guarantees the necessary criterion is with respect to $\lambda_1$, as the controllability matrix is guaranteed to be positive definite and thus nonvanishing on every Oseledec space.  There is, however, a more direct analogue to the statement of \cref{cor:lowerbndspread} where the \emph{adjoint-covariant Lyapunov vectors} will play the role of the eigenvectors of $\bM^\T$.  It is easy to demonstrate that the adjoint-covariant Lyapunov vectors have the desired covariance and growth/decay with respect to the reverse time adjoint model, $\bM^\T_k$.  There exist, under the condition of integrally separated Oseledec spaces, classical constructions for covariant and adjoint-covariant bases that decompose the model propagator into a block-upper-triangular form \cite{adrianova1995introduction}[see Theorem 5.4.9]. This decomposition makes the derivation of a precise statement like \cref{cor:lowerbndspread} analogous in time varying models, with respect to the adjoint-covariant Laypunov vectors and adjoint-covariant Oseledec spaces.  However, the above arguments require significant additional exposition which we feel unnecessary, as \cref{cor:tvnecessary} is sufficiently general.

\begin{cor}\label{cor:stablebnd}
Assume equations \cref{eq:dynmodel} and \cref{eq:obsmodel} satisfy \cref{hyp:obscont} (b) and suppose $\bH_k \bX_k \equiv \0$ for every $k$ such that $\alpha = \beta = 0$.  Let $k\geq 1$ and choose $\bv \in \emph{\sp}\left\{\bB^{i_j}_k : n_0 < i \leq p, \hspace{2mm} 1\leq j \leq \kappa_i\right\}$ such that $\l| \bv\r| =1 $.  There is a $C>0$ independent of $k$ such that
\begin{align}
\bv^\T\bP_k \bv & \leq C <\infty.
\end{align}
\end{cor}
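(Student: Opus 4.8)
\section*{Proof proposal for \cref{cor:stablebnd}}

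The plan is to reduce the statement to the per\nobreakdash-direction bounds already obtained in the proof of \cref{prop:tvbnd} and then control the off\nobreakdash-diagonal cross terms by a Cauchy--Schwarz argument. First I would record the consequence of the hypothesis $\bH_k\bX_k\equiv\0$: it forces $\bX_k^\T\bOmega_k\bX_k=\0$, hence $\alpha=\beta=0$, so the filtering step is vacuous and the recursion \cref{eq:nonautrecursion} collapses to the single relation $\bP_k=\bM_{k:0}\bP_0\bM_{k:0}^\T+\bUpsilon_{k:1}$. In particular the upper estimate driving \cref{prop:tvbnd} applies with $\alpha=0$, and for this only \cref{hyp:obscont} (b) is required, which is exactly what the corollary assumes.

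Second, I would extract a uniform bound on each diagonal entry $\(\bB^{i_j}_k\)^\T\bP_k\bB^{i_j}_k$ for the stable indices. For every $i>n_0$ we have $\lambda_i<0$, so a small $\epsilon>0$ can be chosen with $e^{2(\lambda_i+\epsilon)}<1$; since $\alpha=0$ this is precisely the hypothesis \cref{eq:tvepsilonupper}. The intermediate estimate \cref{eq:backwardstvfinite} then holds for every $k>N_{i,\epsilon}$, and with $\alpha=0$ and $\lambda_i+\epsilon<0$ each summand on its right\nobreakdash-hand side is bounded independently of $k$: the leading term $p_0\(e^{2(\lambda_i+\epsilon)}\)^k$ tends to $0$, the middle term is the constant $C_{\alpha,N_{i,\epsilon}}$ (with $\alpha=0$), and the final sum is a convergent geometric series. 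The finitely many exceptional indices $k\le N:=\max_{i>n_0}N_{i,\epsilon}$ are handled separately: for each such $k$ the matrix $\bP_k$ is a fixed finite positive semi-definite matrix, so $\(\bB^{i_j}_k\)^\T\bP_k\bB^{i_j}_k\le \l|\bP_k\r|$, and a maximum over this finite range is finite. Combining the two ranges yields a single constant $C_0<\infty$, independent of $k$ and of the stable index $i_j$, with $\(\bB^{i_j}_k\)^\T\bP_k\bB^{i_j}_k\le C_0$ for all $k\ge 1$.

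Third, I would pass from the basis vectors to an arbitrary unit vector $\bv$ in the stable span. Writing $\bv=\sum_{i>n_0,\,j}c_{i_j}\bB^{i_j}_k$ in the orthonormal BLV basis gives $\sum_{i,j}|c_{i_j}|^2=1$. Since $\bP_k$ is positive semi-definite, the associated bilinear form satisfies the Cauchy--Schwarz inequality, so every cross term obeys $\rvert\(\bB^{i_j}_k\)^\T\bP_k\bB^{i'_{j'}}_k\rvert\le C_0$. Expanding $\bv^\T\bP_k\bv$ and bounding each of the at most $D^2$ terms, where $D:=\sum_{i=n_0+1}^{p}\kappa_i$ is the fixed number of stable BLVs, gives $\bv^\T\bP_k\bv\le C_0\(\sum_{i,j}|c_{i_j}|\)^2\le C_0\,D\sum_{i,j}|c_{i_j}|^2=C_0D$, so $C:=C_0D$ works and is manifestly independent of $k$.

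The step that carries the real content is the second one: verifying that the bound behind \cref{prop:tvbnd} is genuinely uniform in $k$ (not merely a $\limsup$ statement) along the stable directions when $\alpha=0$. Everything else is bookkeeping --- the collapse of the Riccati bound under $\bH_k\bX_k\equiv\0$ is immediate, the small\nobreakdash-$k$ range is a finite set controlled by compactness, and the passage to a general stable vector is a routine Cauchy--Schwarz summation whose only subtlety is keeping $D$ independent of $k$, which holds because the Lyapunov spectrum and its multiplicities $\kappa_i$ are fixed by \cref{def:lyapexps}.
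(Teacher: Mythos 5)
Your proposal is correct and follows essentially the same route as the paper's own (very terse) proof: with $\alpha=\beta=0$ the recursion \cref{eq:nonautrecursion} becomes an equality, \cref{prop:tvbnd} (whose upper bound needs only \cref{hyp:obscont} (b)) gives the bound along each stable BLV, and the result extends to unit-norm linear combinations. Your additional care in upgrading the $\limsup$ of \cref{eq:tvupper} to a bound uniform over all $k\ge 1$ via \cref{eq:backwardstvfinite} plus the finite exceptional range, and the explicit Cauchy--Schwarz handling of cross terms, simply fills in details the paper leaves implicit.
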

\begin{proof}
The inequality in equation \cref{eq:nonautrecursion} is an equality for the unfiltered forecast where $\beta = \alpha=0$.  Thus the corollary is clear for any stable BLV directly from \cref{prop:tvbnd} and the conclusion extends to norm one linear combinations. 
\end{proof}

\cref{cor:stablebnd} extends the intuition of AUS to the presence of model error: corrections may be targeted along the expanding modes while the uncertainty in the stable modes remains bounded by the system's dynamic stability alone.  Particularly, \emph{without filtering} uncertainty remains uniformly bounded in the span of the stable BLVs. This is analogous to the results of Bocquet et. al. \cite{bocquet2017degenerate}, where in perfect models, the stable dynamics alone are sufficient to dissipate forecast error in the span of the stable BLVs.  With $\alpha=0$, the uniform bound in \cref{cor:stablebnd} may be understood by the two components which equation \cref{eq:tvupper} is composed of, the bound on $\bUpsilon_{k:k-N_{i,\epsilon}}$ and 
\begin{align}\label{eq:digestederror}
\frac{q_{\sup} e^{2(\lambda_i +\epsilon)N_{i,\epsilon} +1}}{1-e^{2(\lambda_i+\epsilon)}}.
\end{align}
The controllability matrix $\bUpsilon_{k:k-N_{i,\epsilon}}$ represents the newly introduced uncertainty from model error that \emph{is yet to be dominated} by the dynamics.  Equation \cref{eq:digestederror} represents an upper bound on the past model errors that have already been dissipated by the stable dynamics.  Nevertheless, this uniform bound is uninformative about the local variability.  In the following sections, we study the variance of the unfiltered uncertainty in the stable BLVs compared to the uncertainty of the Kalman filter.

\section{Numerical experiments}\label{section:numerics}
\subsection{Experimental setup}

To satisfy \cref{hyp:lpregular}, we construct a discrete, linear model from the nonlinear Lorenz-96 \textbf{(L96)} equations \cite{lorenz96}, commonly used in data assimilation literature see, e.g., \cite{carrassi2018data}[and references therein].
For each $m\in\{1,\cdots,n\}$, the L96 equations read $\frac{{\rm d}\bx}{{\rm d} t} \triangleq \bL(\bx)$,
\begin{align}\label{eq:l96}
 L^m(\bx) &=-x^{m-2}x^{m-1} + x^{m-1}x^{m+1} - x^m + F
\end{align}  
such that the components of the vector $\bx$ are given by the variables $x^m$ with periodic boundary conditions, $x^0=x^n$, $x^{-1}=x^{n-1}$ and $x^{n+1}=x^{1}$.  The term $F$ in L96 is the forcing parameter.  The tangent-linear model \cite{kalnay2003atmospheric}  is governed by the equations of the Jacobian matrix, $\nabla \bL(\bx)$,
\begin{align}\label{eq:jacobian}
 \nabla L^m(\bx) = \(0, \cdots, -x^{m-1}, x^{m+1} - x^{m-2}, -1, x^{m-1},0, \cdots, 0\).
\end{align}     

We fix the model dimension $n\triangleq 10$ and the forcing parameter as the standard $F=8$, as the model exhibits chaotic behavior, while the small model dimension makes the robust computation of Lyapunov vectors numerically efficient.  The linear propagator $\bM_k$ is generated by computing the discrete, tangent-linear model \cite{kalnay2003atmospheric} from the resolvent of the Jacobian equation \cref{eq:jacobian} along a trajectory of the L96, with an interval of discretization at $\delta \triangleq 0.1$.  We numerically integrate the Jacobian equation with a fourth order Runge-Kutta scheme with a fixed time step of $h \triangleq 0.01$.

For $F=8$, the 10 dimensional nonlinear L96 model has a non-degenerate Lyapunov spectrum and we replace the superscript $i_j$ with $i$ for the BLVs.  The model has three positive, one neutral and six negative Lyapunov exponents, such that $n_0 = 4$.  The Lyapunov spectrum for the discrete, linear model is computed directly via the relationship in \cref{lemma:computationalLE}, where the average is taken over $10^5$ iterations of the recursive QR algorithm, after pre-computing the BLVs to convergence.  In our simulations, before our analysis, we pre-compute the BLVs and the FLVs over $10^5$ iterations of the recursive QR algorithm for the forward model, or respectively, for the reverse time adjoint model \cite{Kuptsov2012}[see section 3].  We note that the computed Lyapunov spectrum for the discrete, linear model as in simulations is related to the spectrum of the nonlinear L96 model by rescaling the linear model's exponents by $\frac{1}{\delta}$.

\subsection{Variability of recurrent perturbations}
\label{section:localvariability}

While \cref{cor:stablebnd} guarantees that the uncertainty in the stable BLVs is uniformly bounded, this bound strongly reflects the scale of the model error and the local variance of the Lyapunov exponents.  If model errors are large, or the stable Lyapunov exponents have high variance, this indicates that the uniform bound can be impractically large for forecasting.  
Assume no observational or filtering constraint, i.e., $\bH_k\bX_k\equiv \0$.  Suppose that the model error statistics are uniform in time and spatially uncorrelated with respect to a basis of BLVs: $\bQ_k \triangleq  \bB_k \bD \bB_k^\T$, where $\bD\in \mathbb{R}^{n\times n}$ is a fixed diagonal matrix with the $i_j$-th diagonal entry given by $D_{i_j}$.  Denote $\bP_0 \equiv \bQ_0$, then equation \cref{eq:nonautrecursion} becomes
\begin{align}\begin{split}
 \(\bB^{ i_j}_k\)^\T \bP_k \bB^{i_j}_k & = \sum_{l=0}^k \(\bB^{ i_j}_k\)^\T \bM_{k:l} \bQ_l \bM^\T_{k:l}  \bB^{ i_j}_k \\
& =  \sum_{l=0}^k \(\be_{i_j}\)^\T \bT_{k:l} \bD \bT^\T_{k:l}  \be_{i_j}
\\
& = D_{i_j}\sum_{l=0}^k \l| \(\bT^\T_{k:l}\)^{i_j} \r|^2 ,
\end{split}
\label{eq:freeforecast}
\end{align}
where $\l| \(\bT^\T_{k:l}\)^{i_j} \r|$ is the norm of the $i_j$-th row of $\bT_{k:l}$.  In equation \cref{eq:freeforecast}, $\bP_k$ represents the freely evolved uncertainty at time $k$, and thus $\sum_{l=0}^k \l| \(\bT^\T_{k:l}\)^{i_j} \r|^2$ describes the variance of the free evolution of perturbations in the direction of $\bB^{i_j}_k$.
\begin{mydef}
For each $1 \leq i \leq p$, each $1 \leq j \leq \kappa_i$ and any $k$, we define
\begin{align}\label{eq:invscaling}
\Psi_k^{i_j} \triangleq \sum_{l=0}^k \l| \(\bT^\T_{k:l}\)^{i_j} \r|^2
\end{align}
to be the \textbf{free evolution of perturbations} in the direction of $\bB^{i_j}_k$.
\end{mydef}

Assuming the errors are uncorrelated in the basis of BLVs is a strict assumption, but studying the free evolution of perturbations has general applicability: $\bB_k^\T \bQ_k \bB_k \leq  q_{\sup}\bI_n$, and therefore, equation \cref{eq:invscaling} may be interpreted in terms of an upper bound on the variance of the freely evolved forecast uncertainty in the $i_j$-th mode.  \cref{alg:bound} describes our recursive approximation of the free evolution, given by equation \cref{eq:invscaling}, for $k\in \{1,\cdots,m\}$ via the QR algorithm.  We assume that the QR algorithm has been run to numerical convergence for the BLVs at time $0$.

\begin{algorithm}[H]
\caption{Free evolution of perturbations in the $i_j$-th BLV}
\label{alg:bound}
\begin{algorithmic}
\STATE{Define $\bB_0$ to be the BLVs at time zero and $m\geq 1$.}
\FOR{$k= 1, \cdots, m $}
\STATE{Let $\bT_k, \bB_k$ be defined by the QR recursion \cref{eq:qrrecursion}, and let $\bT^s_{k}\in \mathbb{R}^{s\times s}$ be the lower right sub-matrix of $\bT_k$ corresponding to the stable exponents.}
\STATE{Set $\Psi^{i_j}_k = 1$ and $\bT \triangleq \bI_s$.}
        \FOR{$l = 0, \cdots, k-1$}
            \STATE{$\bT \coloneqq \bT\times\bT^s_{k-l}$.}
            \STATE{$\Psi^{i_j}_k \coloneqq \Psi^{i_j}_k + \l| \( \bT^\T\)^{i_j} \r|^2$ for each $i=n_0 +1,\cdots, p$ and $j= 1,\cdots , \kappa_i$.}
        \ENDFOR
        \RETURN $\Psi^{i_j}_k$
\ENDFOR
\end{algorithmic}
\end{algorithm}

\begin{rmk}
Equation \cref{eq:rownorm} implies $\l| \( \bT^\T\)^{i_j} \r|^2$ decays exponentially in $k-l$ and the inner loop of \cref{alg:bound} needs only be computed to the first $l$ such that $\l| \( \bT^\T\)^{i_j} \r|^2$ is numerically zero.
\end{rmk}

The approximation of \cref{eq:invscaling} with \cref{alg:bound} is numerically stable for all $k$ and any $i > n_0$, precisely due to the upper triangular dynamics in the BLVs.  The upper triangularity of all $\bT_k$ means the lower right block of $\bT_{k:l}$ is given as the product of the lower right blocks of the sequence of matrices $\{\bT_j\}_{j= l+1}^k$.  Therefore, computing the stable block of $\bT_{k:l}$ is independent of the unstable exponents, and the row norms of $\bT_{k:l}$ converge uniformly and exponentially to zero by \cref{hyp:uniformconvergence}.

In \cref{fig:bnd_v_lle} we plot $\Psi_k^{5}$ and $\Psi_k^6$ as in \cref{alg:bound} and the LLEs for $\bB^5_k$ and $\bB^6_k$ for $k\in\{1, \cdots, 10^4\}$.  Assuming that $\bQ_k \leq  q_{\sup}\bI_n$, $\Psi^i_k$ bounds the variance in the $i$-th stable mode at time $k$, up to the scaling factor of $q_{\rm sup}$.  As $n_0=4$, the exponent $\lambda_5$ is the stable exponent closest to zero.  The left side of \cref{fig:bnd_v_lle} corresponds to the exponent $\lambda_5 \approx -0.0433$ while the right side corresponds to the exponent $\lambda_6 \approx -0.0878$.  The upper row in \cref{fig:bnd_v_lle} plots the evolution of $\Psi^i_k$ for $\bB^5_k$ and $\bB^6_k$, while  the bottom row shows the corresponding time series of LLEs.  The mean of the LLEs are approximately equal to their corresponding Lyapunov exponent, while the standard deviation is given by $0.142$ for $\lambda_5$ and $0.133$ for $\lambda_6$ respectively.

\begin{figure}[h]\label{fig:bnd_v_lle}
\center
\includegraphics[width=\linewidth]{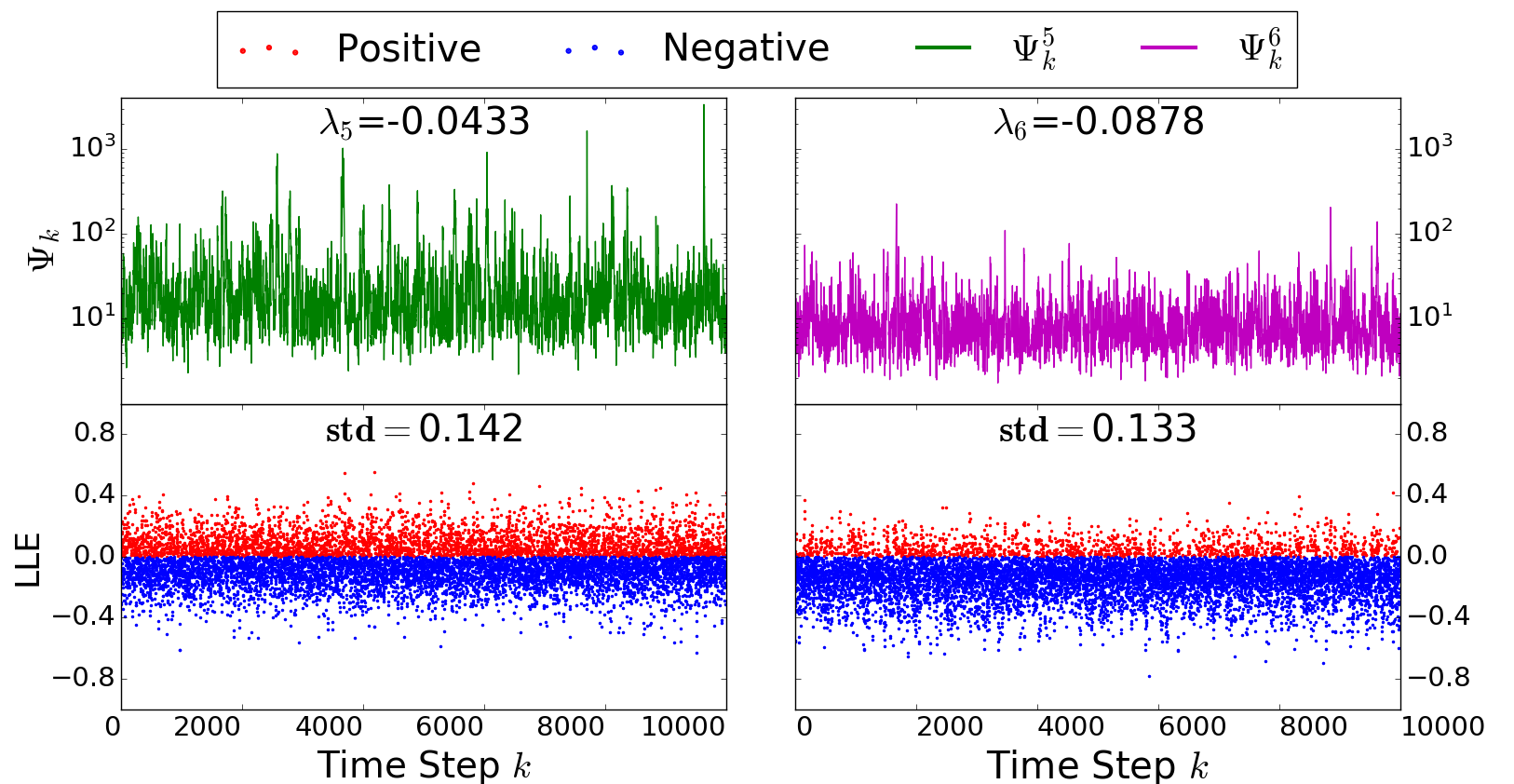}
\caption{\textbf{Horizontal axis:} time step $k \in\{1,\cdots 10^4\}$ \textbf{Upper row:} time series of $\Psi^{5}_k$ and $\Psi^{6}_k$.  \textbf{Lower row:} local Lyapunov exponents of the fifth and sixth backward vector. \textbf{Left column:} $\lambda_5=-0.0433$. \textbf{Right column:} $\lambda_6=-0.0878$.}
\end{figure}

While $\Psi_k^5$ is uniformly bounded, \cref{fig:bnd_v_lle} illustrates that it can be on the order of $\mathcal{O}\(10^3\)$, with a mean value of approximately 808 over the $10^4$ iterations.  This is in contrast to perfect models where the projection of the unfiltered forecast error into any stable mode converges to zero at an exponential rate \cite{bocquet2017degenerate}.  Moreover, the frequency and scale of positive realizations of LLEs of $\bB^5_k$ has a strong impact the variance of the unfiltered error.  The fewer, and weaker, positive realizations of the LLEs of $\bB^6_k$ correspond to the lower overall uncertainty represented by $\Psi^6_k$.  The maximum of $\Psi_k^6$ is on the order of $\O(10^{2})$, with a mean value of approximately 28.

\subsection{Unfiltered versus filtered uncertainty}
\label{section:kferr}

In the following, we compare the variance of the unfiltered error in the stable BLVs, represented by $\Psi_k^i$, for $i\in\{5,\cdots,10\}$, with the uncertainty in the Kalman filter.  Assuming that $\bQ_k \triangleq \bI_n$, in this case $\Psi_k^i$ is equal to the variance of the unfiltered error along $\bB^i_k$.  While the error in the Kalman filter depends on the observational configuration, the value of $\Psi_k^i$ depends only on the underlying dynamics.  Therefore, we benchmark the variance of the unfiltered error over a range of observational designs to determine under what conditions the unfiltered error in the stable BLVs will exceed the uncertainty of the full rank Kalman filter.  This analysis allows us to evaluate how many of the stable BLVs can remain unfiltered while achieving an acceptable forecast performance.  This comparison has a special significance when considering reduced rank, sub-optimal filters, which is the subject of the sequel \cite{grudzien2018inflation}.

The recent works of Bocquet et al. \cite{bocquet2017degenerate} and Frank \& Zhuk \cite{frank2017detectability}, weaken \cref{hyp:obscont} to criteria on the observability, or detectability, of the unstable-neutral subspace to obtain filter stability and boundedness in perfect models.  The results in \cref{cor:lowerbndspread}, \cref{cor:tvnecessary} and \cref{cor:stablebnd} similarly suggest that the sufficient condition for filter boundedness, \cref{hyp:obscont}, may be weakened in the presence of model errors. For this reason, we will study the variance of the filtered error with respect to observations satisfying the criteria discussed by Bocquet et al. \cite{bocquet2017degenerate} and Frank \& Zhuk \cite{frank2017detectability}.

Given a fixed dimension of the observational space $d < n$, consider finding an observational operator, $\bH_k$, which minimizes the forecast uncertainty.  Suppose the singular value decomposition of an arbitrary choice of $\bH_k$ is given as
\begin{align}
\bH_k = \bU_k \bSigma_k \bV_k^\T.
\end{align}
For a given observation error covariance matrix, the size of the singular values of $\bH_k$ correspond to the precision of observations relative to the uncertainty in the precision matrix, $\bOmega_k \triangleq \bH_k^{\T}\bR^{-1}_k\bH_k$.
Imposing that all singular values of $\bH_k$ must be equal to one, then up to an orthogonal transformation of $\bR^{-1}_k$, we equate the choice of an observational operator $\bH_k$ with the selection of an orthogonal matrix $\bV_k \in \mathbb{R}^{n \times d}$.

For perfect models, $\bQ_k \equiv 0$, we write the forecast error Riccati equation in terms of a choice of $\bH_k \triangleq \bV_k^\T$ as
\begin{align}
\bP_{k+1} &= \bM_{k+1} \bX_{k} \[\bI_n + \(\bR_k^{-\frac{1}{2}} \bV^\T_k \bX_{k}\)^\T \(\bR_k^{-\frac{1}{2}}\bV_k^\T \bX_{k}\)\]^{-1}\bX^\T_{k}\bM_{k+1}^\T.
\end{align}
The Frobenius norm, $\l| \bP_{k+1} \r|_\bF = \sqrt{\tr\( \bP_{k+1}^2\)} 
,$ is bounded by
\begin{align} 
 &\tr\left\{  \bM_{k+1} \bX_{k}\[\bI_n + \(\bR_k^{-\frac{1}{2}} \bV^\T_k \bX_{k}\)^\T \(\bR_k^{-\frac{1}{2}}\bV_k^\T \bX_{k}\)\]^{-1}\bX^\T_{k}\bM_{k+1}^\T\right\} \\
 \leq& \tr\left\{  \[\bI_n + \(\bR_k^{-\frac{1}{2}} \bV^\T_k \bX_{k}\)^\T \(\bR_k^{-\frac{1}{2}}\bV_k^\T \bX_{k}\)\]^{-1}\right\} \tr\( \bX^\T_{k}\bM_{k+1}^\T \bM_{k+1} \bX_{k} \).\label{eq:frobineq}
\end{align}
Equation \cref{eq:frobineq} attains its smallest values when the eigenvalues of
\begin{align}\label{eq:eigs}
\bI_n + \(\bR_k^{-\frac{1}{2}} \bV^\T_k \bX_{k}\)^\T \(\bR_k^{-\frac{1}{2}}\bV_k^\T \bX_{k}\)
\end{align}
are as large as possible, similar to maximizing the denominator of equation \cref{eq:tvnecessary}.

For a fixed sequence of observation error covariances, finding the largest eigenvalues of equation \cref{eq:eigs} can be studied by finding the subspace for which the matrix of orthogonal projection coefficients $\bV^\T_{k} \bX_{k}$ has the largest singular values.  In perfect models, the forecast error covariance for the Kalman filter asymptotically has support confined to the span of the unstable and neutral BLVs \cite{gurumoorthy2017, bocquet2017degenerate}.  This is likewise, evidenced for the ensemble Kalman filter in weakly-nonlinear models \cite{ng2011role,bocquet2017four}, suggesting that the columns of $\bV_k$ should be taken as the leading $d$ BLVs. 
\begin{mydef}\label{mydef:unstableobs}
Given $d \geq 1$, let $\bB^{1:d}_k \in \mathbb{R}^{n\times d}$ denote the matrix comprised of the first $d$ columns of $\bB_k$.  We define the observation operator $\bH^{\rm bd}_k \triangleq \(\bB_k^{1:d}\)^\T$.
\end{mydef}

\cref{mydef:unstableobs} is a formalization of the AUS observational paradigms \cite{trevisan2004assimilation, carrassi2007adaptive} utilizing ``bred vectors'' as proxies for the BLVs.  The breeding method of Toth \& Kalnay \cite{toth1997ensemble} simulates how the modes of fast growing error are maintained and propagated through the successive use of short range forecasts in weather prediction.  The bred vectors are formed by initializing small perturbations of a control trajectory and forecasting these in parallel along the control. Upon iteration, the span of these perturbations generically converge to the leading BLVs.  For a discussion of variants of this algorithm, and the convergence to the BLVs, see e.g., Balci et al.  \cite{balci2012ensemble}.

The choice of observation operator in \cref{mydef:unstableobs} is also related to the numerical study of targeted observations for the L96 model of Law et al. \cite{law2016filter}.  Law et al. target observations with the eigenvectors of the operator $\bM^\T_{k+1} \bM_{k+1}$, but note that for a small interval $\delta \triangleq t_{k+1} - t_k$, the difference between the linearized equations defining $\bM^\T_{k+1} \bM_{k+1}$ and $\bM_{k+1}\bM^\T_{k+1}$ is negligible \cite{law2016filter}[see Remark 5.1].  Law et al. suggest that the eigenvectors of either  $\bM^\T_{k+1} \bM_{k+1}$ and $\bM_{k+1}\bM^\T_{k+1}$ may be sensible depending on whether the filter should take into account the principle axes of growth from the past to the current time or from the present to future time.  It is clear from equations \cref{eq:farfuture} and \cref{eq:adjointfarpast} that as $\delta$ becomes large, the eigenvectors of $\bM_{k+1}\bM^\T_{k+1}$ approach the BLVs, whereas $\bM^\T_{k+1} \bM_{k+1}$ approach the FLVs.

\begin{mydef}\label{mydef:Fobs}
Given $d \geq 1$, let $\bF^{1:d}_k \in \mathbb{R}^{n\times d}$ denote the matrix comprised of the first $d$ columns of $\bF_k$.  We define the observation operator $\bH^{\rm fd}_k \triangleq \(\bF_k^{1:d}\)^\T$.
\end{mydef}

Note that the observation operator $\bH_k^{\rm b4}$ uniformly-completely observes the span of the unstable and neutral BLVs, and thus for $d\geq 4$, $\bH_k^{\rm bd}$ satisfies the sufficient criterion for filter stability in perfect dynamics discussed by Bocquet et al. \cite{bocquet2017degenerate}.  The operator $\bH_k^{\rm b4}$ likewise satisfies the necessary and sufficient detectability criterion for filter stability perfect dynamics of Frank \& Zhuk \cite{frank2017detectability}.  On the other hand, the operator $\bH^{\rm fd}_k$ observes the span of the leading $d$ FLVs.  Unlike the BLVs, the FLVs define a QL decomposition of the span of the covariant Lyapunov vectors \cite{Kuptsov2012}[see equation (53)].  This implies that the columns of the operator $\bF_k^{\rm f4}$ actually spans the orthogonal complement to the stable Oseledec spaces.  Therefore, $\bH_k^{\rm f4}$ satisfies the criterion of Frank \& Zhuk \cite{frank2017detectability}, but will not generally satisfy the condition of Bocquet et al. \cite{bocquet2017degenerate}.

We perform parallel experiments, fixing the sequence of linear propagators $\bM_k$, and the initial prior error covariance $\bP_0 \triangleq \bI_n$, while varying the choice of the observation operator and the observational dimension $d$.  In each parallel experiment, we study the average forecast uncertainty for the full rank Kalman filter as described by Frobenius norm of the forecast error covariance $\bP_k$, averaged over $10^5$ assimilations, neglecting a separate filter stabilization period of $10^4$ assimilations.  For each $d\in \{ 4, \cdots, 9\}$, we compare the following choices of observation operators: (i) $\bH_k^{\rm bd}$; (ii) $\bH_k^{\rm fd}$; (iii) $\bH_k \triangleq \bV_k^\T$ for randomly drawn orthogonal matrices, $\bV_k \in \mathbb{R}^{n \times d}$; and (iv) a fixed network of observations, given by the leading $d$ rows of the identity matrix, i.e., $\bH_k \triangleq \bI_{d\times n}$.  We also compute the average Frobenius norm of the forecast error covariance for full dimensional observations, with $\bH_k \triangleq \bI_n$.  In each experiment, the observational and model error covariances are fixed as $\bR_k \triangleq \bI_d$ and $\bQ_k \triangleq \bI_n$.  For each $i$, the value of $\Psi^i_k$ is averaged over the $10^5$ assimilations.

In \cref{fig:obs_comp_b}, we plot the average Frobenius norm of the Kalman filter forecast error covariance matrix as a function of the number of observations, $d$.  We consider the observation configurations $\bH^{\rm bd}_k$, $\bV^\T_k$ and $\bI_n$ (plotted horizontally).  The average values of $\Psi_k^i$ for $i=7, \cdots,10$ are also plotted horizontally.  While the observational dimension $d < 7$, the average uncertainty for the Kalman filter with random observations, or observations in the BLVs, is greater than the average variance of the unfiltered error along $\bB^7_k$.  Similarly, in \cref{fig:obs_comp_f} we consider the configurations with observations defined by $\bH_k^{\rm bd}$, $\bH^{\rm fd}_k$ and $\bI_{d \times n}$.  The average values of $\Psi_k^i$ for $i=5, \cdots,8$ are plotted horizontally.  The variance of the unfiltered error in $\bB^5_k$ exceeds the uncertainty of the Kalman filter in every configuration.  The Kalman filter with observations fixed, or in the FLVs, do not obtain comparable performance with the unfiltered error in $\bB^6_k$ until $d\geq 6$.  Only the LLEs of $\bB^i_k$ for $i=8,9,10$ are sufficiently stable to bound the unfiltered errors below the Kalman filter with a fully observed system.

\begin{figure}[h]\label{fig:obs_comp_b}
\center
\includegraphics[width=.75\linewidth]{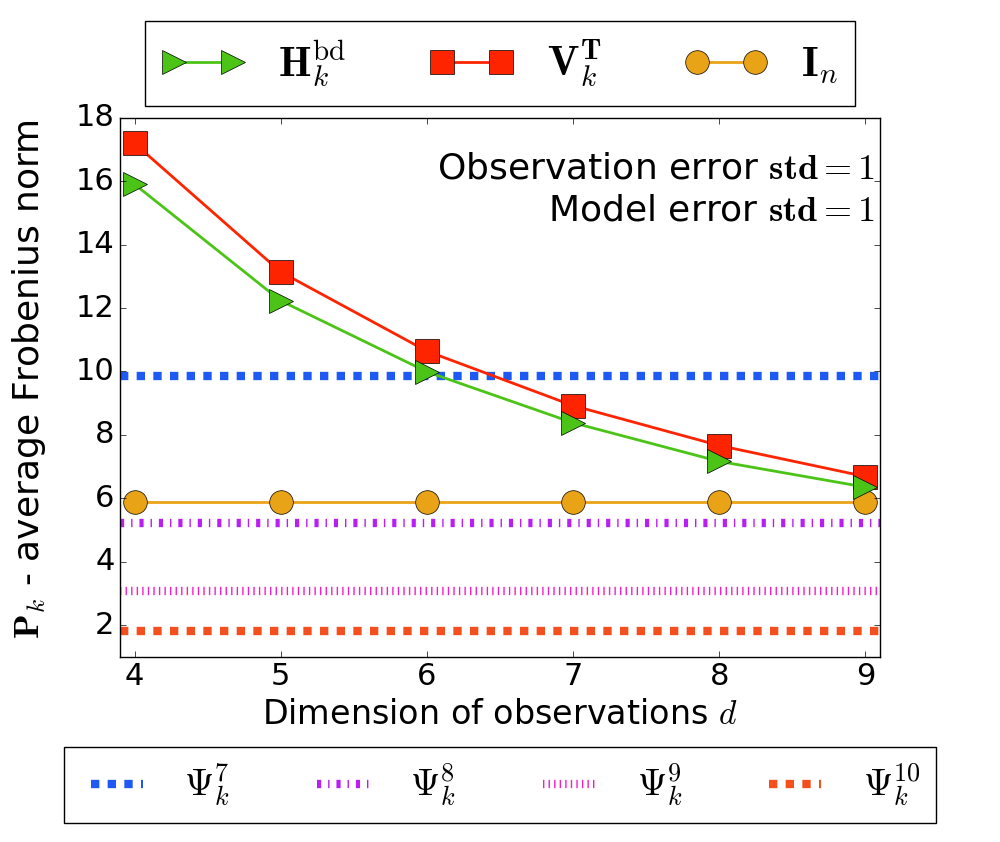}
\caption{Average Frobenius norm of $\bP_k$ over $10^5$ observation-forecast cycles, with dynamic \emph{($\bH^{\rm bd}_k$)} and random \emph{($\bV_k^\T$)} observations plotted versus the observational dimension $d$.  Average of variance in $i$-th BLV, $\Psi^i_k$, $i=7,\cdots, 10$ over $10^5$ observation-forecast cycles plotted horizontally.  Average Frobenius norm of $\bP_k$ with full dimensional observations, \emph{($\bI_n$)}, plotted horizontally.}
\end{figure}

\begin{figure}[h]\label{fig:obs_comp_f}
\center
\includegraphics[width=.75\linewidth]{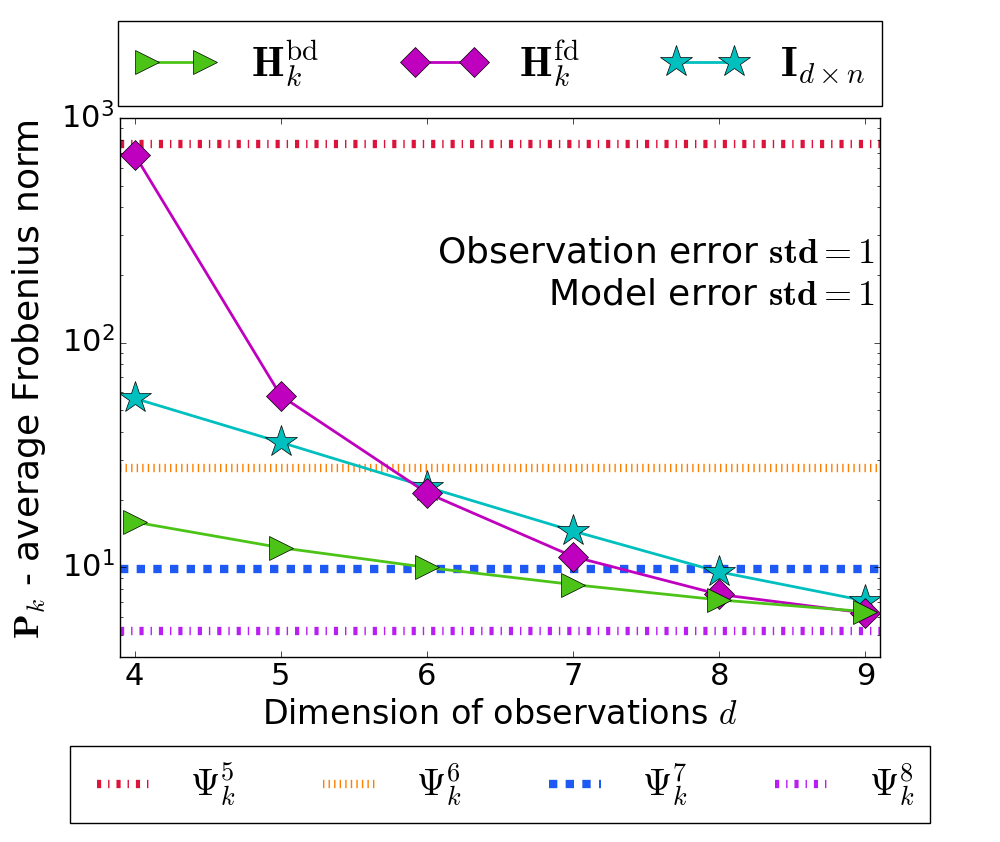}
\caption{Average Frobenius norm of $\bP_k$ over $10^5$ observation-forecast cycles, with BLV \emph{($\bH^{\rm bd}_k$)}, FLV \emph{($\bH_k^{\rm fd}$)}, and fixed observations \emph{($\bI_{d\times n}$)} plotted versus the observational dimension $d$.  Average of variance in $i$-th BLV, $\Psi^i_k$, $i=5,\cdots, 8$ over $10^5$ observation-forecast cycles plotted horizontally.}
\end{figure}

Our results have strong implications for the necessary rank of the gain in ensemble-based Kalman filters.  In perfect, weakly nonlinear models, the ensemble span typically aligns with the leading BLVs \cite{ng2011role,bocquet2017four}.  From the above results, we conclude that the effective rank of the ensemble-based gain must be increased to account for weakly stable BLVs of high variance in the presence of model errors.  The perturbations of model errors excited by transient instabilities in these modes can lead to the unfiltered errors becoming unacceptably large compared to the filtered errors.  

In \cref{fig:obs_comp_b} and \cref{fig:obs_comp_f}, the choice of observations in the span of the leading BLVs dramatically outperforms the observations in the span of the leading FLVs, or fixed observations.  Likewise $\bH^{\rm bd}_k$ makes a slight reduction to the overall forecast error over a choice of $d$ random observations.  As the span of the leading $n_0$ FLVs is orthogonal to the trailing, stable Oseledec spaces, this choice is can be considered closer to the \emph{minimum necessary} observational constraint on the forecast errors.  Particularly, the kernel of $\bF^{{\rm f} n_0}_k$ is identically equal to the sum of the stable Oseledec spaces.  This suggests that a necessary and sufficient condition for filtered boundedness can be described in terms of the observability of the $n_0$ leading FLVs, similar to the criterion of Frank \& Zhuk \cite{frank2017detectability}.  While it is not \emph{necessary}, the sufficient condition of Bocquet et al. \cite{bocquet2017degenerate} leads to a lower filter uncertainty as the span of the leading $n_0$ BLVs generally contains the largest projection of the forecast error.  This suggest that observing the leading eigenvectors of $\bM_{k+1}\bM^\T_{k+1}$ may generally outperform observing the leading eigenvectors of $\bM^\T_{k+1} \bM_{k+1}$ when the time between observations $\delta = t_{k+1} -t_k$ leads to significant differences in the linear expansions, as was noted as an alternative design by Law et al. \cite{law2016filter}.  For operational forecasting, this supports the use of the breeding technique \cite{toth1997ensemble} to target observations, over using the axes of forward growth.

\section{Conclusion}\label{section:conclusion}

This work formalizes the relationship between the Kalman filter uncertainty and the underlying model dynamics, so far understood in perfect models, now in the presence of model error.  Generically, model error prevents the collapse of the covariance to the unstable-neutral subspace and our \cref{prop:autupper} and \cref{prop:tvbnd} characterize the asymptotic window of uncertainty.  We provide a necessary condition for the boundedness of the Kalman filter forecast errors for autonomous and time varying dynamics in \cref{cor:lowerbndspread} and \cref{cor:tvnecessary}: the observational precision, relative to the background uncertainty, must be greater than the leading instability which forces the model error.  Particularly, \cref{cor:stablebnd} proves that forecast errors in the span of the stable BLVs remain uniformly bounded, in the absence of filtering, by the effect of dynamic dissipation alone.  

The uniform bound on the errors in the span of the stable BLVs extends the intuition of AUS to the presence of model error, but with qualifications.  Studying this uniform bound with \cref{alg:bound}, we identify an important mechanism for the growth of forecast uncertainty in sub-optimal filters: variability in the LLEs for asymptotically stable modes can produce transient instabilities, amplifying perturbations of model error.  The impact of stable modes close to zero differs from similar results for nonlinear, perfect models by Ng. et. al. \cite{ng2011role}, and Bocquet et. al. \cite{bocquet2015expanding}, where the authors demonstrate the need to correct weakly stable modes in the ensemble Kalman filter due to the sampling error induced by nonlinearity.  Likewise, this differs from the EKF-AUS-NL of Palatella \& Trevisan \cite{palatella2015}, that accounts for the truncation errors in the estimate of the forecast uncertainty in perfect, nonlinear models.  Our work instead establishes the fundamental impact of these transient instabilities as a linear effect in the presence of model errors.

In addition to our necessary criterion for filter boundedness, in \cref{section:kferr} we discuss the criteria of Bocquet et al. \cite{bocquet2017degenerate} and Frank \& Zhuk \cite{frank2017detectability} in relation to dynamically targeted observations.  Our numerical results suggest how these sufficient, and respectively necessary and sufficient, criteria can be extended to the presence of model errors.   Moreover, we distinguish between the minimal necessary observational constraints for filter boundedness and more operationally effective, sufficient designs.  Particularly, our results suggest that while it may be necessary that the observations uniformly completely observe the span of the unstable-neutral FLVs, it is sufficient and improves performance to uniformly completely observe the span of unstable-neutral BLVs.  In terms of operational forecasting, this strongly supports the use of bred vectors to target observations to constrain the forecast errors.

  \cref{cor:lowerbndspread}, \cref{cor:tvnecessary}, \cref{cor:stablebnd} and the results of \cref{section:numerics} suggest that as a theoretical framework for the ensemble Kalman filter, AUS may be extended to the presence of model errors.  By uniformly completely observing and correcting for the growth of uncertainty in the span of the unstable, neutral and some number of stable BLVs, reduced rank filters in the presence of model errors may obtain satisfactory performance. In practice, one may compute off-line the typical uncertainty in the stable BLVs via \cref{alg:bound} and determine the necessary observational and ensemble dimension at which unfiltered forecast error has negligible impact on predictions.  However, computational limits on ensemble sizes may make this strategy unattainable in practice --- the impact of these unfiltered errors on the performance of a reduced rank, sub-optimal filter is the subject of the direct sequel to this work \cite{grudzien2018inflation}.

\section*{Acknowledgments}
The authors thank two anonymous referees, and their colleagues Karthik Gurumoorthy, Amit Apte, Erik Van Vleck, Sergiy Zhuk and Nancy Nichols, for their valuable feedback and discussions on this work.  CEREA is a member of the Institut Pierre-Simon Laplace (IPSL).
 
\bibliographystyle{plain}
\bibliography{references}
\end{document}